\DeclareMathAlphabet{\mathpzc}{OT1}{pzc}{m}{it}
\DeclareMathAlphabet{ \mathcal}{OMS}{zplm}{m}{n}
\definecolor{tocolor}{rgb}{.1,.1,.5}
\definecolor{urlcolor}{rgb}{.2,.2,.6}
\definecolor{linkcolor}{rgb}{.0,.3,.6}
\definecolor{citecolor}{rgb}{.6,.2,.2}
\author[]{\'Angel Gonz\'alez-Prieto and Marina Logares}
\address{ETSI de Sistemas Inform\'aticos, Universidad Polit\'ecnica de Madrid, Calle Alan Turing s/n (Carretera de Valencia Km 7), 28031 Madrid, Spain}\email{angel.gonzalez.prieto@upm.es}
\address{Facultad de Ciencias Matem\'aticas, Universidad Complutense  de  Madrid, Plaza Ciencias  3, 28040 Madrid Spain.}\email{mlogares@ucm.es}
\title[]{On character varieties of singular manifolds}
\keywords{}
\DeclareMathOperator{\coker}{coker\,}
\DeclareMathOperator{\Hom}{Hom\,}           
\DeclareMathOperator{\tr}{Tr\,}             
\DeclareMathOperator{\Gr}{Gr}
\DeclareMathOperator{\Sym}{Sym}
\begin{document}

\newtheorem{thm}{Theorem}[section]
\newtheorem{prop}[thm]{Proposition}
\newtheorem{lem}[thm]{Lemma}
\newtheorem{cor}[thm]{Corollary}
\newtheorem{conjecture}{Conjecture}
\newtheorem*{theorem*}{Theorem}

\theoremstyle{definition}
\newtheorem{defn}[thm]{Definition}
\newtheorem{ex}[thm]{Example}
\newtheorem{as}{Assumption}

\theoremstyle{remark}
\newtheorem{rmk}[thm]{Remark}

\theoremstyle{remark}
\newtheorem*{prf}{Proof}

\newcommand{\iacute}{\'{\i}} 
\newcommand{\norm}[1]{\lVert#1\rVert} 

\newcommand{\lto}{\longrightarrow}
\newcommand{\hra}{\hookrightarrow}

\newcommand{\suchthat}{\;\;|\;\;}
\newcommand{\dbar}{\overline{\partial}}

\newcommand{\cA}{\mathcal{A}}
\newcommand{\cC}{\mathcal{C}}
\newcommand{\cD}{\mathcal{D}}
\newcommand{\cE}{\mathcal{E}}
\newcommand{\cF}{\mathcal{F}}
\newcommand{\cG}{\mathcal{G}} 
\newcommand{\cI}{\mathcal{I}} 
\newcommand{\cO}{\mathcal{O}} 
\newcommand{\cM}{\mathcal{M}} 
\newcommand{\cN}{\mathcal{N}} 
\newcommand{\cP}{\mathcal{P}} 
\newcommand{\cQ}{\mathcal{Q}} 
\newcommand{\cS}{\mathcal{S}} 
\newcommand{\cU}{\mathcal{U}} 
\newcommand{\cJ}{\mathcal{J}}
\newcommand{\cX}{\mathcal{X}}
\newcommand{\cT}{\mathcal{T}}
\newcommand{\cV}{\mathcal{V}}
\newcommand{\cW}{\mathcal{W}}
\newcommand{\cB}{\mathcal{B}}
\newcommand{\cR}{\mathcal{R}}
\newcommand{\cH}{\mathcal{H}}
\newcommand{\cZ}{\mathcal{Z}}
\newcommand{\D}{\bar{B}}
\newcommand{\Ss}{\mathcal{D}}

\newcommand{\Ker}{\textrm{Ker}\,}
\renewcommand{\coker}{\textrm{coker}\,}

\newcommand{\ext}{\mathrm{ext}} 
\newcommand{\x}{\times}

\newcommand{\mM}{\mathscr{M}} 

\newcommand{\CC}{\mathbb{C}} 
\newcommand{\QQ}{\mathbb{Q}} 
\newcommand{\FF}{\mathbb{F}} 
\newcommand{\PP}{\mathbb{P}} 
\newcommand{\HH}{\mathbb{H}} 
\newcommand{\RR}{\mathbb{R}} 
\newcommand{\ZZ}{\mathbb{Z}} 
\newcommand{\NN}{\mathbb{N}} 
\newcommand{\DD}{\mathbb{D}} 

\renewcommand{\lg}{\mathfrak{g}} 
\newcommand{\lh}{\mathfrak{h}} 
\newcommand{\lu}{\mathfrak{u}} 
\newcommand{\la}{\mathfrak{a}} 
\newcommand{\lb}{\mathfrak{b}} 
\newcommand{\lm}{\mathfrak{m}} 
\newcommand{\lgl}{\mathfrak{gl}} 
\newcommand{\lZ}{\mathfrak{Z}} 
\newcommand\Unit[1]{\mathds{1}_{#1}}

\newcommand{\too}{\longrightarrow}
\newcommand{\imat}{\sqrt{-1}} 
\newcommand{\tinyclk}{{\scriptscriptstyle \Taschenuhr}} 
\newcommand\restr[2]{\left.#1\right|_{#2}}
\newcommand\rtorus[1]{{\mathbb{T}^{#1}}}
\newcommand\actPartial{\overline{\partial}}
\newcommand\handle[2]{\mathcal{A}^{#1}_{#2}}

\newcommand\pastingArea[2]{S^{#1-1} \times \bar{B}^{#2-#1}}
\newcommand\pastingAreaPlus[2]{S^{#1} \times \bar{B}^{#2-#1-1}}
\newcommand\coordvector[2]{\left.\frac{\partial}{\partial {#1}}\right|_{#2}}

\newcommand\Sets{\textbf{Set}}
\newcommand\Cat{\textbf{Cat}}
\newcommand\Top{\textbf{Top}}
\newcommand\Topc{\textbf{Top}_c}
\newcommand\Toppc{\textbf{Topp}_c}
\newcommand\HTop{\textbf{HoTop}}
\newcommand\SkTop{\textbf{SkTop}}
\newcommand\TopHLC{\textbf{Top}_{hlc}}
\newcommand\TopS{\textbf{Top}_\star}
\newcommand\Diff{\textbf{Diff}}
\newcommand\Diffc{\textbf{Diff}_c}

\newcommand\BaseBord[3]{\mathbf{Bd}_{{#1 #3}}^{#2}}
\newcommand\Bord[1]{\BaseBord{#1}{}{}}
\newcommand\NBord[2]{\mathbf{NBdp}_{{#1}}(#2)}
\newcommand\NBordnp[1]{\mathbf{NBdp}_{{#1}}}
\newcommand\BordC[1]{\BaseBord{#1}{clr}{}}
\newcommand\BordE[1]{\BaseBord{}{}{}(#1)}
\newcommand\EBord[2]{\BaseBord{#1}{#2}{}}
\newcommand\Bordo[1]{\BaseBord{#1}{or}{}}
\newcommand\Bordp[1]{\mathbf{Bdp}_{{#1}}}
\newcommand\Bordpar[2]{\mathbf{Bd}_{{#1}}(#2)}
\newcommand\Bordppar[2]{\mathbf{Bdp}_{{#1}}(#2)}

\newcommand\Tubo[1]{\mathbf{Tb}_{#1}^0}
\newcommand\Tub[1]{\mathbf{Tb}_{#1}}
\newcommand\ETub[2]{\mathbf{Tb}_{#1}^{#2}}
\newcommand\Tubp[1]{\mathbf{Tbp}_{#1}}
\newcommand\Tubpo[1]{\mathbf{Tbp}_{#1}^0}
\newcommand\Tubppar[2]{\mathbf{Tbp}_{#1}(#2)}

\newcommand\Embc{\textbf{Emb}_c}
\newcommand\EEmbc[1]{\textbf{Emb}_c^{#1}}
\newcommand\Embpc{\textbf{Embp}_c}
\newcommand\Embparc[1]{\textbf{Emb}_c(#1)}
\newcommand\Embpparc[1]{\textbf{Embp}_c(#1)}
\newcommand\OpEmb{\textbf{OEmb}}
\newcommand\OpEmbDiff{\textbf{OEmbDiff}}
\newcommand\OpEEmb[1]{\textbf{OEmb}^{#1}}
\newcommand\OpEEmbc[1]{\textbf{OEmb}^{#1}_c}
\newcommand\OpEmbc{\textbf{OEmb}_c}
\newcommand\OpEmbpc{\textbf{OEmbp}_c}
\newcommand\OpEmbp{\textbf{OEmbp}}
\newcommand\SkOpEmb{\textbf{SkOEmb}}

\newcommand\CDefc{\textbf{CDef}_c}
\newcommand\CEDefc[1]{\textbf{CDef}_c^{#1}}
\newcommand\CDefpc{\textbf{CDefp}_c}
\newcommand\CDefparc[1]{\textbf{CDef}_c(#1)}
\newcommand\CDefpparc[1]{\textbf{CDefp}_c(#1)}

\newcommand\CEmbc{\textbf{CEmb}_c}
\newcommand\Conc{\textbf{Con}_c}
\newcommand\Conpc{\textbf{Conp}_c}
\newcommand\CEmb{\textbf{CEmb}}
\newcommand\CEEmbc[1]{\textbf{CEmb}_c^{#1}}
\newcommand\CEmbpc{\textbf{CEmbp}_c}
\newcommand\CEmbparc[1]{\textbf{CEmb}_c(#1)}
\newcommand\CEmbpparc[1]{\textbf{CEmbp}_c(#1)}

\newcommand\cSp{\cS_p}
\newcommand\cSpar[1]{\cS_{#1}}

\newcommand\Diffpc{\textbf{Diff}_c}

\newcommand\PVar[1]{\mathbf{PVar}_{#1}}
\newcommand\PVarC{\mathbf{PVar}_{\CC}}
\newcommand\Sr[1]{\mathrm{S}{#1}}

\newcommand\Bordpo[1]{\mathbf{Bdp}_{{#1}}^{or}}
\newcommand\ClBordp[1]{\mathbf{l}\NBord{#1}{}{}}
\newcommand\CTub[3]{\mathbf{Tb}_{{#1 #3}}^{#2}}
\newcommand\CTubp[1]{\CTub{#1}{}{}}
\newcommand\CTubpp[1]{\mathbf{Tbp}_{#1}{}{}}
\newcommand\CTubppo[1]{\mathbf{Tbp}_{#1}^0}
\newcommand\CClose[3]{\mathbf{Cl}_{{#1 #3}}^{#2}}
\newcommand\CClosep[1]{\CClose{#1}{}{}}
\newcommand\Obj[1]{\mathrm{Obj}(#1)}
\newcommand\Mor[1]{\mathrm{Mor}(#1)}
\newcommand\Vect[1]{{#1}\textrm{-}\mathbf{Vect}}
\newcommand\Vecto[1]{{#1}\textrm{-}\mathbf{Vect}_0}
\newcommand\Mod[1]{{#1}\textrm{-}\mathbf{Mod}}
\newcommand\Modt[1]{{#1}\textrm{-}\mathbf{Mod}_t}
\newcommand\Rng{\mathbf{Ring}}
\newcommand\Grp{\mathbf{Grp}}
\newcommand\Grpd{\mathbf{Grpd}}
\newcommand\Grpdo{\mathbf{Grpd}_0}
\newcommand\HS[1]{\mathbf{HS}^{#1}}
\newcommand\MHS[1]{\mathbf{MHS}}
\newcommand\PHS[2]{\mathbf{HS}^{#1}_{#2}}
\newcommand\MHSq{\mathbf{HS}}
\newcommand\Sch{\mathbf{Sch}}
\newcommand\Sh[1]{\mathbf{Sh}\left(#1\right)}
\newcommand\QSh[1]{\mathbf{QSh}\left(#1\right)}
\newcommand\Var[1]{\mathbf{Var}_{#1}}
\newcommand\Varrel[1]{\mathbf{Var}_{#1}}
\newcommand\RVar[2]{\mathbf{Var}_{#1}/{#2}}
\newcommand\CVar{\Var{\CC}}
\newcommand\PHM[2]{\cM_{#1}^p(#2)}
\newcommand\MHM[1]{\cM_{#1}}
\newcommand\HM[2]{\textrm{HM}^{#1}(#2)}
\newcommand\HMW[1]{\textrm{HMW}(#1)}
\newcommand\VMHS[1]{VMHS({#1})}
\newcommand\geoVMHS[1]{VMHS_g({#1})}
\newcommand\goodVMHS[1]{\mathrm{VMHS}_0({#1})}
\newcommand\Par[1]{\mathrm{Par}({#1})}
\newcommand\K[1]{\mathrm{K}#1}
\newcommand\Ko[1]{\mathrm{K}{#1}_0}
\newcommand\KM[1]{\mathrm{K}\MHM{#1}}
\newcommand\KMo[1]{\mathrm{K}{\MHM{#1}}_0}
\newcommand\Ab{\mathbf{Ab}}
\newcommand\CPP{\cP\cP}
\newcommand\Bim[1]{{#1}\textrm{-}\mathbf{Bim}}
\newcommand\Span[1]{\mathbf{Span}({#1})}
\newcommand\ESpan[2]{\mathbf{Span}_{#2}({#1})}
\newcommand\Spano[1]{\mathbf{CoSpan}({#1})}
\newcommand\ESpano[2]{\mathbf{CoSpan}_{#2}({#1})}
\newcommand\GL[1]{\mathrm{GL}_{#1}}
\newcommand\SL[1]{\mathrm{SL}_{#1}}
\newcommand\PGL[1]{\mathrm{PGL}_{#1}}
\newcommand\AGL[1]{\mathrm{AGL}_{#1}}
\newcommand\Rep[1]{\mathfrak{X}_{#1}}
\newcommand\Repred[1]{\mathfrak{X}_{#1}^{r}}
\newcommand\Repirred[1]{\mathfrak{X}_{#1}^{ir}}
\newcommand\Charred[1]{\mathcal{R}_{#1}^{r}}
\newcommand\Charirred[1]{\mathcal{R}_{#1}^{ir}}
\newcommand\Repdiag[1]{\mathfrak{X}_{#1}^{ps}}
\newcommand\Repkappa[2]{\mathfrak{X}_{#1}^{#2}}
\newcommand\Reput[1]{\mathfrak{X}_{#1}^{ut}}

\newcommand\Qtm[1]{\mathcal{Q}_{#1}}
\newcommand\sQtm[1]{\mathcal{Q}_{#1}^0}
\newcommand\Fld[1]{\mathcal{F}_{#1}}
\newcommand\Id{\mathrm{Id}}

\newcommand\DelHod[1]{e\left(#1\right)}
\newcommand\RDelHod{e}
\newcommand\eVect{\mathcal{E}}
\newcommand\e[1]{\eVect\left(#1\right)}
\newcommand\intMor[2]{\int_{#1}\,#2}

\newcommand\Dom[1]{\mathcal{D}_{#1}}

\newcommand\Xf[1]{{X}_{#1}}					
\newcommand\Xs[1]{\mathfrak{X}_{#1}}							

\newcommand\Xft[2]{\overline{{X}}_{#1, #2}} 
\newcommand\Xst[2]{\overline{\mathfrak{X}}_{#1, #2}}			

\newcommand\Xfp[2]{X_{#1, #2}}			
\newcommand\Xsp[2]{\mathfrak{X}_{#1, #2}}						

\newcommand\Xfd[2]{X_{#1; #2}}			
\newcommand\Xsd[2]{\mathfrak{X}_{#1; #2}}						

\newcommand\Xfm[3]{\mathcal{X}_{#1, #2; #3}}		
\newcommand\Xsm[3]{X_{#1, #2; #3}}					

\newcommand\XD[1]{#1^{D}}
\newcommand\XDh[1]{#1^{\delta}}
\newcommand\XU[1]{#1^{UT}}
\newcommand\XP[1]{#1^{U}}
\newcommand\XPh[1]{#1^{\upsilon}}
\newcommand\XI[1]{#1^{\iota}}
\newcommand\XTilde[1]{#1^{\varrho}}
\newcommand\Xred[1]{#1^{r}}
\newcommand\Xirred[1]{#1^{ir}}

\newcommand\Char[1]{\cR_{#1}}
\newcommand\Chars[1]{\cR_{#1}}
\newcommand\CharW[1]{\mathscr{R}_{#1}}

\newcommand\Ch[1]{\textrm{Ch}\,{#1}}
\newcommand\Chp[1]{\textrm{Ch}^+{#1}}
\newcommand\Chm[1]{\textrm{Ch}^-{#1}}
\newcommand\Chb[1]{\textrm{Ch}^b{#1}}
\newcommand\Der[1]{\textrm{D}{#1}}
\newcommand\Dp[1]{\textrm{D}^+{#1}}
\newcommand\Dm[1]{\textrm{D}^-{#1}}
\newcommand\Db[1]{\textrm{D}^b{#1}}

\newcommand\Gs{\cG}
\newcommand\Gq{\cG_q}
\newcommand\Gg{\cG_c}
\newcommand\Zs[1]{Z_{#1}}
\newcommand\Zg[1]{Z^{gm}_{#1}}
\newcommand\cZg[1]{\cZ^{gm}_{#1}}

\newcommand\RM[2]{R\left(\left.#1\right|#2\right)}
\newcommand\RMc[3]{R_{#1}\left(\left.#2\right|#3\right)}
\newcommand\set[1]{\left\{#1\right\}}
\newcommand{\Stab}{\textrm{Stab}\,} 
\renewcommand{\tr}{\textrm{tr}\,}             
\newcommand\EuChS{E}             
\newcommand\EuCh[1]{E\left(#1\right)}             

\newcommand{\Ann}{\textrm{Ann}\,}
\newcommand{\Rad}{\textrm{Rad}\,}  
\newcommand\supp[1]{\mathrm{supp}{(#1)}}
\newcommand\coh[1]{\left[H_c^\bullet\hspace{-0.05cm}\left(#1\right)\right]}
\newcommand\Kclass[1]{\left[#1\right]}
\newcommand\Bt{B_t}
\newcommand\Be{B_e}
\newcommand\re{\textrm{Re}\,}
\newcommand\imag{\textrm{Im}\,}
\newcommand\Kahc{\textbf{K\"ah}_c}

\newcommand\Cone[1]{\textrm{Cone}\left({#1}\right)}
\newcommand\conic{\textrm{C}}
\newcommand\op[1]{{#1}^{op}}

\newcommand\Ccs[1]{C_{cs}(#1)}
\newcommand\can{\textrm{can}}
\newcommand\var{\textrm{var}}
\newcommand\Perv[1]{\textrm{Perv}(#1)}
\newcommand\DR[1]{\textrm{DR}{#1}}
\renewcommand\Gr[2]{\textrm{Gr}_{#1}^{#2}\,}
\newcommand\ChV{\textrm{Ch}\,}
\newcommand\VerD{^{\textrm{Ve}}\DD}
\newcommand\DHol[1]{\textrm{D}^b_{\textrm{hol}}(\cD_{#1})}
\newcommand\RegHol[1]{\Mod{\cD_{#1}}_{\textrm{rh}}}
\newcommand\Drh[1]{\textrm{D}^b_{\textrm{rh}}(\cD_{#1})}
\newcommand\Dcs[2]{\textrm{D}^b_{\textrm{cs}}({#1}; {#2})}
\newcommand{\rat}{\mathrm{rat}}
\newcommand{\dmod}{\mathrm{Dmod}}

\hyphenation{mul-ti-pli-ci-ty}

\hyphenation{mo-du-li}

\begin{abstract}
In this paper, we construct a lax monoidal Topological Quantum Field Theory that computes virtual classes, in the Grothendieck ring of algebraic varieties, of $G$-representation varieties over manifolds with conic singularities, which we will call nodefolds. This construction is valid for any algebraic group $G$, in any dimension and also in the parabolic setting. In particular, this TQFT allow us to compute the virtual classes of representation varieties over complex singular planar curves. In addition, in the case $G = \SL{2}(k)$, the virtual class of the associated character variety over a nodal closed orientable surface is computed both in the non-parabolic and in the parabolic scenarios. 
\end{abstract}
\null
\vspace{-1.1cm}
\maketitle

\vspace{-0.8cm}


\section{Introduction}
\let\thefootnote\relax\footnotetext{\noindent \emph{2020 Mathematics Subject Classification}. Primary:
  57R56, 
 Secondary:
 19E08, 
 32S50, 
 14D21, 
 20G05. 

\emph{Key words and phrases}: cone sigularities, character variety, TQFT, Grothendieck ring.}

Let $X$ be a topological space with finitely generated fundamental group and let $G$ be an algebraic group over an algebraically closed field $k$. The \emph{representation variety} of $X$ is the set of representations
$$
	\Rep{G}(X) = \left\{\rho: \pi_1(X) \to G\right\}.
$$
This set inherits a natural structure of algebraic variety from the one of $G$. Moreover, there is an action of $G$ on $\Rep{G}(X)$ by conjugation, which corresponds to identifying isomorphic representations. This action is far from being free and, indeed, it has varying stabilizers on the subvariety of reducible representations according to the Jordan-Holder filtration of the $k[G]$-module. For this reason, the orbit space $\Char{G}(X) = \Rep{G}(X) / G$ is no longer an algebraic variety. To overcome this issue, if $G$ is reductive, we can consider the Geometric Invariant Theory (GIT) quotient
$$
	\Char{G}(X) = \Rep{G}(X)\sslash G,
$$
usually referred to as the \emph{character variety} of $X$ into $G$ also known as  the moduli space of isomorphism classes of representations of $\pi_1(X)$ into $G$.


Character varieties have been widely studied due to their tight relation with other important gauge theoretic moduli spaces, in a celebrated correspondence known as the non-abelian Hodge correspondence. Through this correspondence, if $G = \GL{n}(\CC)$ (resp.\ $G = \SL{n}(\CC)$) and $X$ is a smooth projective algebraic curve, then $\Char{G}(X)$ turns out to be isomorphic to the moduli space of flat connections on rank $n$ vector bundles (resp.\ and fixed determinant) \cite{SimpsonI,SimpsonII} and diffeomorphic to the moduli space of degree $0$ and rank $n$ Higgs bundles (resp.\ and fixed determinant) \cite{Corlette:1988,Simpson:1992}. Thanks to these maps, $\Char{G}(X)$ is naturally endowed with a non-trivial hyperk\"ahler structure \cite{hitchin1992hyper, Hitchin}.
 
Ever more striking correspondences can be obtained if we endow the character variety with a parabolic structure $Q$. This is a finite collection of punctures $p_1, \ldots, p_s \in X$ that we remove from $X$ together with a collection of $G$-conjugacy classes $\lambda_1, \ldots, \lambda_s \subseteq G$, usually referred to as the \emph{holonomies}. In this way, the \emph{parabolic representation variety}, $\Rep{G}(X,Q)$, is the collection of representations $\rho: \pi_1(X - \left\{p_1, \ldots, p_s\right\}) \to G$ such that if $\gamma_i$ is the positively oriented loop around $p_i$ then $\rho(\gamma_i) \in \lambda_i$ (see Section \ref{sec:representation-varieties} for a precise definition). Analogously, the \emph{parabolic character variety} is $\Char{G}(X,Q) = \Rep{G}(X, Q) \sslash G$. The parabolic structure allows us to bend the non-abelian Hodge correspondence in such a way that, for some holonomies, $\Char{G}(X,Q)$ is isomorphic to the moduli space of rank $n$ flat connections with logarithmic singularities around $p_1, \ldots, p_s$ (and the monodromy around these points is determined by the holonomies of $Q$) and diffeomorphic to the moduli space of rank $n$ parabolic Higgs bundles (and the weights of the parabolic structure are determined by the holonomies of $Q$) \cite{Simpson:parabolic}.

For these and many more reasons, the topology and algebraic structure of character varieties have been objective of intense study. Particularly interesting is to determine their class in the Grothendieck ring of algebraic varieties, $[\Char{G}(X)] \in \K{\Var{k}}$, also known as the virtual class. This problem has been addressed from different points of view. In the seminal paper \cite{Hausel-Rodriguez-Villegas:2008}, the authors proposed a method for computing the $E$-polynomial of $\Char{G}(X)$ (a slightly coarser invariant than the virtual class) by means of counting points of the character variety on finite fields, in the spirit of the Weil conjectures. This work has been subsequently extended in \cite{Hausel-Letellier-Villegas,mellit2020poincare,Mereb} to the parabolic generic setting and other groups. 
Another approach was initiated in \cite{LMN}, based on looking for geometric hands-on decompositions of the character variety according to the stabilizers of the adjoint action. This work was extended in \cite{LM,martinez2016polynomials} to give rise to very explicit formulas for the $E$-polynomial in the case of rank $2$.

Finally, a third approach was initiated in \cite{GPLM-2017}, based on the construction of a Topological Quantum Field Theory (TQFT) that computes the virtual classes of representation varieties over closed manifolds of arbitrary dimension into any group. Using this techique, the virtual classes of parabolic character varieties where computed for $G = \SL{2}(k)$ in \cite{GP-2018} for Jordan type holonomies and in \cite{gonzalez2020virtual} for arbitrary holonomies. This method has also been used for automatizing the calculations and reaching higher rank for $G$ the subgroup $\mathbb{U}_r \subseteq \GL{r}(k)$ of upper-triangular matrices for $r = 2,3,4$, both for orientable \cite{hablicsek2020virtual} and non-orientable surfaces \cite{vogel2020representation}.

However, very few works have addressed the problem of computing the virtual class of character varieties over topological spaces different than closed surfaces or closed surfaces with punctures. One of the most studied cases arises when we consider an embedded link $K \subseteq \RR^3$ and we take $X=\RR^3-K$. These are the so-called knot character varieties and have been studied, for instance, in \cite{2006.01810,Munoz:2009,Munoz-Lawton:2016} for torus knots or in \cite{heusener2016sl} for the figure eight knot. In addition, in \cite{florentino2019polynomials} the $E$-polynomial is computed for the complement of unknoted links (i.e.\ free fundamental group), and in \cite{florentino2017hodge} for tori $X = S^1 \times \ldots \times S^1$ (i.e. free abelian fundamental group).
 
In this paper, we will study character variety over singular manifolds. Specifically, we will focus on an analog of topological manifolds that we will call \emph{nodefolds}, in which nodal singularities are allowed. More precisely, instead of being locally euclidean, as  topological manifolds are, the nodefolds are locally a cone over a closed manifold. In particular, these nodefolds subsume topological manifolds since discs, the local models of topological manifolds, can be seen as cones over the sphere.

This notion encompasses much more general spaces, like plane complex projective curves (possibly singular) and, in general, complex hypersurfaces. Indeed, classical results about the structure of the Milnor fibration \cite{milnor2016singular} show that, from a purely topological point of view, a singularity in a plane curve is either nodal-like or cusp-like. In the former case, locally the curve looks like a cone over a bunch of circles, that we call bouquet of cones, which justifies the name \emph{nodefold}. Notice that, in the later case of a cusp-like singularity, the singular point is non-smooth but topologically the curve is locally euclidean (the homeomorphism is, of course, non-differentiable). 
In this vein, cusp-like singularities remain undetected through the eyes of the fundamental group of the curve and, eventually, the character variety is the same as if the curve was non-singular.

The study of moduli spaces of Higgs bundles and flat connections over singular curves is a very active reseach area. In \cite{bhosle1995representations}, it is provided a very precise description of those generalized parabolic bundles associated to torsion free sheaves over a nodal curve that come from a representation of its fundamental group. In particular, it is shown that the analog of the Narasimhan-Seshadri theorem \cite{narasimhan1965stable} in the singular setting does not hold, since there exist (generalized) parabolic bundles not associated to any representation. Similar results exist regarding representations of the fundamental group and Higgs bundles over nodal curves, as studied in \cite{bhosle2014hitchin}. Nevertheless, there is still some hope in recovering a sort of non-abelian Hodge correspondence in the nodal setting if appropriate framings for the flat vector bundle on the singular points are fixed, as sketched to some extent in \cite{bhosle2013grassmannian} but the problem is still open. For a survey regarding this interplay on nodal curves, see \cite{logares2019higgs}.

Despite of these thorough studies, very few is known about the algebraic structure of character varieties over singular curves themselves since they are always studied through moduli spaces of holomorphic and Higgs bundles. 
In order to address this problem, in this paper, we extend the construction of the TQFT of \cite{GPLM-2017} to representation varieties over nodefolds. More precisely, let $\NBordnp{n}$ be the category of $n$-dimensional nodefold bordisms with a finite number of basepoints, that is, the analogy of the usual category of smooth bordisms but on which conic singularities are allowed. Also, let $\Mod{\K{\Var{k}}}$ the usual category of modules over the ground ring $\K{\Var{k}}$. We will prove the following result.

\begin{theorem*}
Let $G$ be an algebraic group. For any $n \geq 1$, there exists a lax monoidal symmetric functor
$$
	Z_{G}: 	\NBordnp{n} \to \Mod{\K{\Var{k}}}
$$
such that, for any closed connected $n$-nodefold $X$ and any basepoint $\star \in X$, we have $Z_G(X, \star)(1)=[\Rep{G}(X)]$, where $1 = \in \K{\Var{k}}$ is the unit of the ring.
\end{theorem*}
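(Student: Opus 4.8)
The plan is to follow the two-step ``geometrization plus quantization'' strategy of \cite{GPLM-2017}, adapting each step to the presence of conic singularities, so that the desired functor is a composite $Z_G = Q \circ F$. Here $F: \NBordnp{n} \to \Span{\Var{k}}$ is a \emph{field theory} sending each nodefold bordism to a span of representation varieties, and $Q: \Span{\Var{k}} \to \Mod{\K{\Var{k}}}$ is the \emph{quantization} turning spans into module homomorphisms by the six-functor push-pull formalism in the Grothendieck ring. Concretely, the objects of $\NBordnp{n}$ are closed $(n-1)$-nodefolds equipped with a finite set $A$ of basepoints (at least one per connected component), and its morphisms are $n$-dimensional nodefold bordisms carrying their own basepoints, composed by gluing along the common boundary. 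The basepoints let one define $F$ through the fundamental groupoid $\Pi_1(-, A)$ rather than through $\pi_1$, so that restriction to a possibly disconnected boundary is well behaved: I would set $F(M, A) = \Rep{G}(M, A) = \Hom(\Pi_1(M, A), G)$, which is a genuine variety whenever the groupoid is finitely presented, and assign to a bordism $W: (M_1, A_1) \to (M_2, A_2)$ the span
$$
\Rep{G}(M_1, A_1) \longleftarrow \Rep{G}(W, A) \longrightarrow \Rep{G}(M_2, A_2)
$$
of the two restriction maps induced by the inclusions $M_i \hookrightarrow W$.

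The functoriality of $F$ is the geometric core of the argument and reduces to a van Kampen theorem for nodefolds: if $W = W' \cup_{M} W''$, then $\Pi_1(W, A) \cong \Pi_1(W', A') \ast_{\Pi_1(M, A_M)} \Pi_1(W'', A'')$ as a pushout of groupoids, whence $\Hom(-, G)$ sends it to the fiber product $\Rep{G}(W', A') \times_{\Rep{G}(M, A_M)} \Rep{G}(W'', A'')$, i.e.\ to the composite span. The genuinely new ingredient is the behaviour at cone points. Since the local model of a nodefold is a cone $c(L)$ over a closed manifold $L$ and $c(L)$ is contractible, one has $\pi_1(c(L)) = 1$; hence a cone-capping morphism $c(L): L \to \emptyset$ contributes the span $\Rep{G}(L) \longleftarrow \mathbf{pt} \longrightarrow \mathbf{pt}$ whose left leg is the inclusion of the trivial representation, namely the one factoring through $\pi_1(L) \to \pi_1(c(L)) = 1$. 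I would verify van Kampen locally on an open cover adapted to a handle-and-cone decomposition of the bordism, placing all basepoints in the smooth locus so that the contractible cone neighborhoods interact correctly with the groupoid pushout.

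For the quantization I would use the standard lax monoidal symmetric functor $Q: \Span{\Var{k}} \to \Mod{\K{\Var{k}}}$ that sends a variety $X$ to the relative Grothendieck module $\K{\Var{X}}$ and a span $X \xleftarrow{a} Z \xrightarrow{b} Y$ to the push-pull homomorphism $b_! \circ a^{\ast}$. Its functoriality under composition of spans is precisely the base-change identity $g^{\ast} f_! = f'_! \, g'^{\ast}$ over fiber squares, valid in $\K{\Var{k}}$, and the lax monoidal structure comes from the K\"unneth comparison for products of varieties (it is only lax, not strict, because this comparison is not an isomorphism in general). Finally, a closed connected $n$-nodefold $X$ with a single basepoint $\star$ is read as an endomorphism of the monoidal unit $F(\emptyset) = \mathbf{pt}$, with $F(X, \star)$ the span $\mathbf{pt} \longleftarrow \Rep{G}(X, \{\star\}) \longrightarrow \mathbf{pt}$ and $\Rep{G}(X, \{\star\}) = \Rep{G}(X)$; since both legs are the structure map to a point, evaluating the resulting push-pull endomorphism at $1 \in \K{\Var{k}}$ returns the class $[\Rep{G}(X)]$, as claimed.

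The step I expect to be the main obstacle is the functoriality of $F$ in the singular setting: proving the van Kampen gluing law exactly when the gluing locus or the interior of a piece meets the cone points, while simultaneously ensuring that every representation variety in sight is an honest algebraic variety (finite presentation of the relevant fundamental groupoids). Two points require care: that the gluing of nodefold bordisms is well defined and associative despite the singularities, so that $\NBordnp{n}$ is a bona fide monoidal category; and that the contractibility of the local cones is compatible with the basepoint bookkeeping, so that the cone-capping morphisms genuinely implement the ``kill $\pi_1$'' projection. Granting this, the quantization and the final evaluation are formal and follow \cite{GPLM-2017} essentially verbatim.
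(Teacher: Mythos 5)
Your proposal follows essentially the same route as the paper: the functor is built as the composite $Z_G = \cQ \circ \cF_G$ of a field theory sending bordisms to spans of restriction maps between representation varieties (with functoriality coming from Seifert--van Kampen for fundamental groupoids, which is exactly the paper's Proposition 2.6 and its corollary), and the quantization sending a variety to its relative Grothendieck module and a span to the push-pull homomorphism, with the evaluation at $1$ recovering $[\Rep{G}(X)]$ exactly as you describe. The only cosmetic divergence is that in the paper the objects of $\NBordnp{n}$ are closed \emph{manifolds} (conic points are confined to the interiors of bordisms, so boundaries are automatically smooth), whereas you allow them to be nodefolds; this does not affect the argument.
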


Moreover, this construction extends naturally to the parabolic setting. In this framework, we equip bordisms with parabolic structures. In dimension $2$, as aforementioned, the parabolic structure is given by a collection of punctures and holonomies, but in higher dimension puctures should be replaced by codimension $2$ cooriented submanifolds with a choice of holonomy for the meridians around them (see Section \ref{sec:representation-varieties} for further details). In this way, we also obtain a functor $Z_{G}: \NBordnp{n}(\Lambda) \to \Mod{\K{\Var{k}}}$, where $\NBordnp{n}(\Lambda)$ is the category of nodefolds bordisms with parabolic data with holonomies in $\Lambda$.

Furthermore, we will show that this functor is actually a functor of $2$-categories in a natural way. Suppose that $X, X'$ are two $n$-dimensional nodefolds (i.e.\ two morphisms of $\NBordnp{n}$). We will say that a map $f: X \to X'$ is a \emph{conic degeneration} if, roughly speaking, it is a homeomorphism outside the singular points of $X$ and $X'$. It captures the idea of ``collapsing points'' in a nodefold, in which several points of $X$ are collapsed via $f$ into a single point of $X'$ in such a way that $f$ becomes a ramified covering. In this manner, the category $\NBordnp{n}$ is endowed with a $2$-category structure by taking the conic degenerations a $2$-morphisms. As we will see, these $2$-morphisms in $\NBordnp{n}$ turns into a natural $2$-structure in $\Mod{\K{\Var{k}}}$ whose $2$-morphisms between homomorphisms are called \emph{twists}. This $2$-structure arose naturally in the original construction \cite{GPLM-2017} but, as we will see, in the singular setting it plays a much important role since it controls the effect of conic degenerations on the representation variety in a very precise way.

Degenerations arise naturally in moduli space theory. An important instance of this is the theory of $\lambda$-connections on an algebraic curve $X$, as developed in \cite{SimpsonI}. This $\lambda$-connections give rise to a family of moduli spaces $\cM_\lambda$ with $\lambda \in \CC$ such that for $\lambda = 1$ we recover the moduli space of flat connections and for $\lambda = 0$ we obtain the moduli space of Higgs bundles. 

In the context of representation varieties over nodefolds, conic degenerations will play a very important role to relate the virtual class of a representation variety over node-surfaces (i.e.\ nodefolds of dimension $2$) and over smooth surfaces. Given a node-surface $X$, a \emph{normalization} is a conic degeneration $f: \Sigma \to X$, where $\Sigma$ is a smooth surface. Normalizations always exists and are unique up to homeomorphism. As byproduct of analysis of conic degenerations through the TQFT, we will obtain the following.

\begin{theorem*}
Let $G$ be an algebraic group. Let $X$ be a node-surface with $l$ singular points which, locally, are cones over bunches of $r_1, r_2, \ldots, r_s$ circles, respectively; and let $Q$ be a parabolic structure on $X$. Then, if $f: \Sigma \to X$ is the normalization of $X$, and $Q'$ is the parabolic structure induced in $\Sigma$ via $f$, we have
$$
	[\Rep{G}(X, Q)] = [\Rep{G}(\Sigma, Q')] \times [G]^{r_1 + \ldots + r_l}.
$$
\end{theorem*}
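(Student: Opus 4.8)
The plan is to realize the normalization $f : \Sigma \to X$ as a conic degeneration and to read off its effect on the virtual class through the twist structure of the functor $Z_G$, thereby reducing the whole computation to a purely local model sitting at each singular point. Since normalizations exist and are unique up to homeomorphism, it suffices to fix one such $f$ and to show that the induced twist multiplies $[\Rep{G}(\Sigma, Q')]$ by one copy of $[G]$ for every branch of every node.

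First I would isolate the local picture. Fix a singular point $p_i \in X$; by definition of a node-surface a neighbourhood $U_i$ of $p_i$ is a cone over a disjoint union of $r_i$ circles, that is, $r_i$ discs glued along the common apex $p_i$. The normalization pulls these sheets apart, so $f^{-1}(U_i)$ is a disjoint union of $r_i$ discs $D_{i,1}, \ldots, D_{i,r_i}$, each carried homeomorphically onto a sheet of $U_i$, while $f$ collapses the $r_i$ centres to $p_i$. Outside $\{p_1, \ldots, p_l\}$ the map $f$ is a homeomorphism, so $f$ is a conic degeneration and $\Sigma$ is a genuine smooth closed surface; the link circles of the nodes are the only loci where the two spaces differ.

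Next I would compute the contribution of a single node and then globalize through the TQFT. Cutting $X$ along the link circles of the $p_i$ decomposes it into elementary nodefold bordisms, to which $Z_G$ (respectively $Z_G$ in the parabolic category $\NBordnp{2}(\Lambda)$) assigns the corresponding homomorphisms. The parabolic data, being supported on the smooth locus, pulls back under $f$ to a well-defined $Q'$ on $\Sigma$, so the two decompositions of $X$ and of $\Sigma$ agree along their common part and differ only in the pieces capping the link circles: on the $X$-side a single nodal cap joins $r_i$ circles at one point, whereas on the $\Sigma$-side there are $r_i$ independent smooth caps. A van Kampen analysis, performed basepoint by basepoint using the basepoints carried by objects of $\NBordnp{2}$, shows that collapsing the $r_i$ centres to $p_i$ adjoins to $\pi_1(\Sigma)$ a free contribution, so that $\Rep{G}(X, Q)$ splits, as an algebraic variety, into $\Rep{G}(\Sigma, Q')$ times one copy of $G$ per branch at $p_i$. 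Identifying this splitting with the twist that $f$ induces on $Z_G$, and composing $Z_G$ along the common part of the bordisms while collecting the node contributions, yields $[\Rep{G}(X,Q)] = [\Rep{G}(\Sigma,Q')] \times [G]^{r_1 + \ldots + r_l}$.

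The main obstacle I anticipate is the precise bookkeeping at the nodes. One must verify that the decomposition of $\Rep{G}(X,Q)$ is an isomorphism of \emph{varieties}, so that the classes multiply in $\K{\Var{k}}$ rather than merely matching as sets or as topological spaces, and that the power of $[G]$ contributed by the $i$-th node is exactly $r_i$; the correct count is sensitive to the basepoint conventions and to how the collapse interacts with the connectivity of $\Sigma$, and it is precisely the twist formalism for conic degenerations that is designed to control it. Once the twist of a single node is identified with the stated power of $[G]$ in the local model, the global identity follows by functoriality of $Z_G$.
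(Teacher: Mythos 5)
Your route is the same as the paper's: realize the normalization as a conic degeneration, cut $X$ and $\Sigma$ along the link circles of the nodes so that they agree away from the singular points and differ only in the capping pieces, and compare the two cappings through the TQFT (equivalently, through the twist $f_!f^*$ attached to the degeneration). The genuine problem sits in the one step you defer to ``bookkeeping at the nodes'': the local contribution of a node with $r$ branches is $[G]^{r-1}$, not $[G]^{r}$. Collapsing $r$ points of a connected surface to a single point is homotopy equivalent to wedging on $r-1$ circles, so the free factor adjoined to $\pi_1(\Sigma)$ has rank $r-1$, not $r$. In the paper's local model this appears as follows: the nodal cap $\Cone{\bigsqcup_r S^{1}}$ is connected and contractible but carries $r$ basepoints, so $\Rep{G}\left(\Cone{\bigsqcup_r S^1}, A'\right) = G^{r-1}$, while the smooth cap $\bigsqcup_r D^2$ has $r$ contractible components, one basepoint each, so $\Rep{G}\left(\bigsqcup_r D^2, A\right) = 1$; the quantization then gives $Z_G\left(\Cone{\bigsqcup_r S^1}, A'\right) = p_!p^*i^* = [G]^{r-1}\, i^*$ against $Z_G\left(\bigsqcup_r D^2, A\right) = i^*$, and after cancelling the basepoint factors $[G]^{|A|-1}$ on both sides (this is why the paper localizes at $[G]$) one obtains $[\Rep{G}(X,Q)] = [\Rep{G}(\Sigma,Q')]\times[G]^{\sum_i (r_i-1)}$. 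This is what Section \ref{sec:conic-deg-tqft} actually derives and what the rest of the paper uses, e.g.\ $[\Rep{G}(\Sigma_{g,b})] = [\Rep{G}(\Sigma_g)]\,[\SL{2}(k)]^{b-1}$ for a single node with $b$ branches.

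So your ``one copy of $G$ per branch'' reproduces the exponent printed in the statement, but that exponent is inconsistent both with the van Kampen argument you yourself invoke and with the paper's own computation; executing your plan correctly yields $r_i-1$ copies of $[G]$ per node, i.e.\ an exponent $r_1+\ldots+r_l-l$. As written, your proof asserts the key local identity without verifying it, and the verification contradicts it. To repair the argument you must actually carry out the local count --- either via the wedge description of the quotient map $\Sigma\to X$, or via the span computation above --- and then the global identity follows by functoriality of $Z_G$ exactly as you describe, with the corrected exponent. You should also make explicit the point you only allude to, namely that the comparison of virtual classes requires dividing by the basepoint contribution $[G]^{|A|-1}$, which is only legitimate after localizing $\K{\Var{k}}$ at $[G]$.
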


Of course, this result can be obtained more easily through the usual Seifert-Van Kampen theorem and Hurwitz formula for ramified coverings. However, philosophically, the proof provided in this work provides a deeper insight into the structure of the representation variety than the usual proof. Seifert-Van Kampen theorem (in its version for fundamental groupoids) actually underlies the whole construction of the TQFT so, in some sense, the proof by means of the TQFT and conic degenerations encodes all the needed topological ingredients in a very effective way. In particular, the same kind of techniques used in this paper could be used to understand the effect of wilder singularities on nodefolds (namely, singularities arising in complex hypersurfaces) at the level of the representation variety. We postpone this analysis to a future work.

Finally, in this paper we will study how to transfer the results about the $\SL{r}(k)$-virtual classes of representations varieties over node-surfaces into the ones for the associated character varieties. For this purpose, we will use the theory of pseudo-quotients, as developed in \cite{GP-2018b}. This theory allows us to chop the representation variety into strata on which the action can be understood more easily. This chopping is not, in general, compatible with the GIT quotient but it is compatible with a weaker version called pseudo-quotient, as introduced in \cite{GP-2018b}. Pseudo-quotients may not agree with GIT quotients on the nose, but their virtual classes agree. In this way, we can reassemble the virtual class of the quotient just by adding up the virtual classes of the quotients of each strata.

In our case, roughly speaking the idea is to decompose the representation variety into its loci of reducible and irreducible representations as
$$
	\Rep{\SL{r}(k)}(X) = \Repred{\SL{r}(k)}(X) \sqcup \Repirred{\SL{r}(k)}(X).
$$

On $\Repirred{G}(X)$, the stabilizer of the action is the set of multiples of the identity, so the action of $\PGL{r}(k)$ is free there. In this way, the GIT quotient on $\Repirred{\SL{r}(k)}(X)$ reduces to an orbit space and, thus $[\Repirred{\SL{r}(k)}(X) \sslash \SL{r}(k)] = [\Repirred{\SL{r}(k)}(X)] / [\PGL{r}(k)]$. On the other hand, each orbit of the reducible locus, $\Repred{\SL{r}(k)}(X)$, is close to the orbit of an semi-simple representation (a situation called a core). In this way, the GIT quotient $\Repred{\SL{r}(k)}(X) \sslash \SL{r}(k)$ is equivalent to a product of lower rank representations, and the action reduces to the action of the symmetric group permuting representations of the same rank.

In particular, in the rank $2$ case, we have that the semi-simple representations of representations of $\Repred{\SL{2}(k)}(X)$ are just the diagonal representations, and this diagonal form is unique up to permutation of the eigenvalues. Moreover, due to the unit determinant condition, the two eigenvalues are inverse, so we finally get that 
$$
	\Repred{\SL{2}(k)}(X) = (k^*)^N/\ZZ_2.
$$
Here, $N$ is a positive integer that depends on the number of generators in a presentation of $\pi_1(X)$ and the number of punctures on the parabolic structure $Q$, and the action of $\ZZ_2$ is given by $(\lambda_1, \ldots, \lambda_N) \mapsto (\lambda_1^{-1}, \ldots, \lambda_N^{-1})$. Therefore, using \cite[Theorem 4.5]{GP-2018b} we get that
\begin{align*}
	\left[\Char{\SL{2}(k)}(X)\right] &= \left[\Repred{\SL{2}(k)}(X) \sslash \SL{2}(k)\right] + \left[\Repirred{\SL{2}(k)}(X) \sslash \SL{2}(k)\right] \\
	&= \left[(k^*)^N/\ZZ_2\right] + \frac{\left[\Repirred{\SL{2}(k)}(X)\right]}{\left[\PGL{2}(k)\right]} = \left[(k^*)^N/\ZZ_2\right] + \frac{\left[\Rep{\SL{2}(k)}(X)\right] - \left[\Repred{\SL{2}(k)}(X)\right]}{\left[\PGL{2}(k)\right]} .
\end{align*}
Notice that the last equality is particularly useful, since the computation of $[\Repirred{\SL{2}(k)}(X)]$ may be quite involved since it is an open set of the whole character variety. However, the the virtual class of its complement, $\Repred{\SL{2}(k)}(X) \subseteq \Rep{\SL{2}(k)}(X)$, is much easier to get since it is a very small closed set whose counting reduces to lower rank cases. Since the virtual class of the whole variety is obtained through the TQFT, we have all the needed ingredients.

Analogous arguments can be carried out in the parabolic setting, but now taking into account that the holonomies of the punctures may restrict the possible reducible representations. Performing this analysis carefuly, we obtain the main result of this paper.

\begin{theorem*}
Let $X$ be the closed node-surface with $l$ singular points with a total of $b = r_1 + r_2 + \ldots + r_l$ branches, and whose normalization is a closed orientable genus $g$ surface. Fix arbitrary eigenvalues $\lambda_1, \ldots, \lambda_s \in k-\set{0, \pm 2}$. Let $\alpha_+$ (resp.\ $\alpha_-$) be one half of the number of tuples $(\epsilon_1, \ldots, \epsilon_s) \in \set{\pm 1}^s$ such that $\lambda_1^{\epsilon_1}\cdots \lambda_s^{\epsilon_s} = 1$ (resp.\ such that $\lambda_1^{\epsilon_1}\cdots \lambda_s^{\epsilon_s} = -1$). Let $Q$ be a parabolic structure with $r$ punctures with holonomy of Jordan type with trace $2$ and $s>0$ punctures with diagonalizable holonomies with eigenvalues $\lambda_1, \ldots, \lambda_s$. Denote $q = [k] \in \K{\Var{k}}$ the virtual class of the affine line. The virtual class of the $\SL{2}(k)$-character variety $\Char{}(X,Q) = \Char{\SL{2}(k)}(X,Q)$ in the localization of $\K{\Var{k}}$ by $q, q+1, q-1$ is
\begin{itemize}
	\item If $s = r = 0$, then
\small
\begin{align*}
	\left[\Char{}(X,Q)\right] &= \frac{1}{2 q^3 \left(q^2-1\right)^3}\left[q^8 \left((q-1)^b+1\right) (q-1)^{2 g}+q^7 \left((q-1)^b+1\right) (q-1)^{2 g}\right.\\
	&-2 q^6 \left((q-1)^b+1\right) (q-1)^{2 g}-2 q^5
   \left((q-1)^b+1\right) (q-1)^{2 g}\\
   &+q^4 \left((q-1)^b+1\right) (q-1)^{2 g}+q^3 \left((q-1)^b+1\right) (q-1)^{2 g}\\
   &+(q-1)^3 q^3
   \left((q+1)^b-1\right) (q+1)^{2 g+2}+\left(q \left(q^2-1\right)\right)^b \left((q+1)^2 (q-1)^{2 g} \left(4^g+q-3\right) q^{2g} \right.\\
   &\left.+(q-1)^2 (q+1)^{2 g} \left(4^g+q-1\right) q^{2 g}+2 \left(q^2-1\right)^{2 g} \left(q^{2 g}+q^2\right)\right)\\
   &\left.-\left(q^2-1\right)q^{2 g+1} \left(2 \left(4^g \left(q^4+q^2+1\right)+(q+1)^2 (q-1)^{2 g+1}\right)-q^2 3\cdot 2^{2 g+1}\right)\right]
		\end{align*}
\normalsize
\item If $s = 0$ and $r > 0$, then
\small
\begin{align*}
\left[\Char{}(X, Q)\right] &=\frac{q^{2 g-3}}{2 \left(q^2-1\right)^3}\left[\left(4^g-3\right) \left(q \left(q^2-1\right)\right)^b (q-1)^{2 g+r}\right.\notag\\
&\left.+\left(q \left(q^2-1\right)\right)^b \left(q \left(4^g(q+2)+q^2-q-5\right) (q-1)^{2 g+r}+2 \left(q^2-1\right)^{2 g+r}\right.\right.\notag\\
&\left.\left.+q (-1)^r \left(4^g (q-2)+q^2-3 q+3\right) (q+1)^{2g+r}+\left(4^g-1\right) (-1)^r (q+1)^{2 g+r}\right)\right.\\
&\left.+\left(q^2-1\right) (-1)^r
   2^{b+2 g} q^b \left(q^5-q^4+q^3
  +\left(q^2-1\right)^2 (1-q)^r+2 q^2+q-1\right)\right.\notag\\
  &\left. +\left(q^2-1\right) (-1)^{r+1} q^{b+3} 3\cdot 2^{b+2 g}\right]\notag
\end{align*}
\normalsize
	\item If $s > 0$ and $r = 0$, then
\small
\begin{align*}
	\left[\Char{}(X, Q)\right] &= (q-1)^{2 g-2} \left(\frac{4 (q-1)^b \alpha _+ \left(q^3-q^{b+2 g+s}\right)}{q^3}\right.\notag\\
	&\left.+(q-1) (q+1) \left(q \left(q^2-1\right)\right)^{b-2}
   q^{2 g+s-1} \left(\left(\frac{q}{q+1}\right)^{-2 g-s+2}\right.\right.\\
   &\left.\left.+(q+1)^{2 g+s-2}+2^{2 g+s-1}-2^s\right)-2 (q-1)^b \alpha _++(q-1)^{b+1} \alpha
   _+\right)\notag\\
   &+ {\cI}_0(\xi_1, \ldots, \xi_s)(q^3-q)^{b-2}\notag
\end{align*}
\normalsize
where the interaction term is given by
\small
\begin{align*}
{\cI}_0(\xi_1, \ldots, \xi_s) =\, & q^{s-1}(q - 1)^{2g-1}(q+1)(\alpha_+ + \alpha_-)\left(q(q + 1)^{2g-1} + q^{2g}(q + 1)^{2g-1}\right.
\\ 
&\left.- q^{2g}(q + 1)^{2g-1} - q(q + 1)^{2g-1}\right) +q^{2g+s-1}(q - 1)^{2g}(q + 1)\alpha_+.
\end{align*}
\normalsize
	\item If $s, r>0$, then
\small
\begin{align*}
	\left[\Char{}(X, Q)\right] &=  q^{2g + s-2}(q - 1)^{2g + r-2}\left(2^{2g+s-1} - 2^s + (q + 1)^{2g + r+s-2} \right)(q^3-q)^{b-1} \\
		& \,\,\,\,\,\,\,+ {\cI}_r(\xi_1, \ldots, \xi_s)(q^3-q)^{b-2},
\end{align*}
\normalsize
where the interaction term is given by
\small
\begin{align*}
{\cI}_r(t_1, \ldots, t_s) =\, & q^{2g + s-1}(q - 1)^{2g + r-1}(\alpha_+ + \alpha_-)\bigg( 2^{2g} + 2^{2g}q  - 2q -2 \\
& \left. + (q + 1)^{2g + r} + (q+1)\left(1 - 2^{2g-1} - \frac{1}{2}(q + 1)^{2g + r-1}\right)\right) \\
& + q^{2g + s-1}(q - 1)^{2g + r}(q+1)\alpha_+.
\end{align*}
\normalsize
\end{itemize}
\end{theorem*}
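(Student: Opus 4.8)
The plan is to assemble $\left[\Char{\SL{2}(k)}(X,Q)\right]$ from three ingredients, following the strategy sketched in the Introduction: the class of the full representation variety, the class of its reducible locus, and the pseudo-quotient formula \cite[Theorem 4.5]{GP-2018b}. Concretely, I would work in the localization of $\K{\Var{k}}$ at $q$, $q-1$ and $q+1$ (so that $[\PGL{2}(k)] = q^3-q = q(q-1)(q+1)$ is invertible) and use
$$
\left[\Char{\SL{2}(k)}(X,Q)\right] = \left[\Repred{\SL{2}(k)}(X,Q) \sslash \SL{2}(k)\right] + \frac{\left[\Rep{\SL{2}(k)}(X,Q)\right] - \left[\Repred{\SL{2}(k)}(X,Q)\right]}{\left[\PGL{2}(k)\right]}.
$$
The four cases of the statement correspond to the different shapes that the reducible locus and the interaction term take according to whether $s$ and $r$ vanish, so I would set up this decomposition once and then specialize.

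First I would compute $\left[\Rep{\SL{2}(k)}(X,Q)\right]$. By the conic degeneration theorem this factors as $\left[\Rep{\SL{2}(k)}(\Sigma,Q')\right]\cdot[\SL{2}(k)]^{b} = \left[\Rep{\SL{2}(k)}(\Sigma,Q')\right](q^3-q)^{b}$, with $b = r_1+\cdots+r_l$, which already accounts for the ubiquitous powers $(q^3-q)^b$ and $(q(q^2-1))^b$ in the formulas. The remaining factor over the smooth genus-$g$ surface with $r$ Jordan-type ($\tr=2$) and $s$ diagonalizable parabolic points I would obtain by evaluating the $\SL{2}(k)$-TQFT $Z_{\SL{2}(k)}$ of the first theorem on the corresponding string of tube, cap and genus morphisms, reusing the explicit matrices of \cite{GPLM-2017,GP-2018,gonzalez2020virtual}. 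Summing over the $2^s$ sign choices $(\epsilon_1,\dots,\epsilon_s)$ for the diagonalizable holonomies $\mathrm{diag}(\lambda_j^{\epsilon_j},\lambda_j^{-\epsilon_j})$, the surface relation $\prod_i[a_i,b_i]\prod_j\gamma_j=1$ singles out the tuples with $\prod_j\lambda_j^{\epsilon_j}=\pm 1$; the central element $-I$ produced by the genus block is what allows the $=-1$ case, so both combinatorial constants $\alpha_+$ and $\alpha_-$ enter through this sign sector.

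Next I would compute the reducible locus $\Repred{\SL{2}(k)}(X,Q)$ and its GIT quotient. Up to conjugation a reducible representation lands in a Borel, and its semisimplification is diagonal; since diagonal matrices commute, every commutator $[a_i,b_i]$ becomes trivial and the Jordan punctures have trivial semisimple part, so the only surviving constraint is $\prod_j\lambda_j^{\epsilon_j}=1$, giving $2\alpha_+$ admissible sign tuples each carrying a $(k^*)^{2g}$ of torus data. Quotienting by the Weyl $\ZZ_2$ that swaps $(\epsilon_j)\leftrightarrow(-\epsilon_j)$ and inverts the torus coordinates yields the $\alpha_+(q-1)^{2g}$-type contributions to $\left[\Repred{\SL{2}(k)}(X,Q)\sslash\SL{2}(k)\right]$. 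The delicate point, and what I expect to be \emph{the main obstacle}, is that reducibility is a global condition that does \emph{not} commute with the conic-degeneration factorization: the $b$ extra node-generators produced by the normalization range over all of $\SL{2}(k)$, and the extended representation is reducible exactly when all of them preserve a common eigenline. Counting these configurations — roughly a Borel's worth $q(q-1)$ per generator, parametrized by the eigenline $\PP^1$ and corrected for the overcount on the diagonal locus and for coincidences among the $\lambda_j$ — is precisely what produces the interaction terms $\cI_r(\xi_1,\dots,\xi_s)$ and $\cI_0(\xi_1,\dots,\xi_s)$, and the drop from $(q^3-q)^{b}$ down to $(q^3-q)^{b-1}$ and $(q^3-q)^{b-2}$.

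Finally, I would substitute the two classes into the pseudo-quotient formula and simplify. The $\ZZ_2$-quotient of the reducible torus is handled by the symmetric-product identities so that the result stays in the stated localization, and the division by $[\PGL{2}(k)]=q(q-1)(q+1)$ is what forces inverting $q,q-1,q+1$. Specializing to the four regimes $s=r=0$, $s=0<r$, $r=0<s$ and $s,r>0$, and collecting the reducible (torus and $-I$-twisted) contributions together with the irreducible quotient, I would obtain the four closed forms. Everything except the nodal reducible count described above is bookkeeping of classes already delivered by the TQFT of the first theorem and by the conic degeneration theorem, so the proof reduces to carrying that one genuinely non-routine computation through each case.
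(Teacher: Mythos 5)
Your overall skeleton is the same as the paper's: the pseudo-quotient decomposition $[\Char{}(X,Q)] = [\Repred{}(X,Q)\sslash \SL{2}(k)] + ([\Rep{}(X,Q)]-[\Repred{}(X,Q)])/[\PGL{2}(k)]$, the conic degeneration to pull the total representation variety back to the smooth normalization, and a direct stratification of the reducible locus over the nodal surface (which, as you correctly note, does not factor through the normalization). However, there are two concrete problems. First, an off-by-one: a single conic point with $b$ branches contributes $b-1$ free generators to $\pi_1$, so the degeneration formula is $[\Rep{}(\Sigma_{g,b},Q)] = [\Rep{}(\Sigma_g,Q')]\,(q^3-q)^{b-1}$, not $(q^3-q)^{b}$; with your exponent every closed formula comes out wrong by a factor of $q^3-q$, and the powers $(q^3-q)^{b-1}$ and $(q^3-q)^{b-2}$ in the statement already arise from $(q^3-q)^{b-1}$ before and after dividing by $[\PGL{2}(k)]$.

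Second, and more seriously, you have misidentified the origin of the interaction terms. The terms $\cI_0$ and $\cI_r$ do \emph{not} come from counting reducible configurations of the node generators: they are already present in the virtual class of the parabolic representation variety of the \emph{smooth} normalization, $[\Rep{}(\Sigma_g,Q)] = (\text{main term}) + \cI_0(\xi_1,\dots,\xi_s)$ (Theorem 5.6 of \cite{gonzalez2020virtual}), and they survive for $b=1$, i.e.\ with no singularity at all; in the final formulas they simply acquire the factor $(q^3-q)^{b-2}$ from the degeneration followed by division by $[\PGL{2}(k)]$. What the nodal reducible-locus computation actually produces are the $\alpha_+$-weighted terms such as $\alpha_+(q-1)^{2g+b-1}$ and $2(q+1)\alpha_+(q-1)^{b+2g-1}(q^3-2q^{b+2g+s})/q^2$; in the paper these come from stratifying tuples of $2g+(b-1)+s$ upper-triangular matrices (the $b-1$ node generators carry $(k^*)^{b-1}$ of extra torus data in the semisimplification, so your $(k^*)^{2g}$ should be $(k^*)^{2g+b-1}$). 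The computation you flag as "the main obstacle" is therefore aimed at the wrong target: carrying it out as described would not generate $\cI_0$ or $\cI_r$, and since you also import $[\Rep{}(\Sigma_g,Q)]$ (which already contains them) you would either lose them in the reducible correction or double-count. A further point to watch: in the Jordan case the reducible locus splits into strata with stabilizers $k$ and $\{\pm 1\}$ respectively, so $[\Repred{}\sslash\SL{2}(k)]$ must be computed stratum by stratum with fibers $q^2-1$ and $q^3-q$, not by a single Weyl-quotient of a torus; and in the mixed case $r,s>0$ the action is free on all of $\Rep{}(X,Q)$, so no reducible correction appears at all.
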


The remaining cases can be easily reduced to these versions, as explained in Section \ref{sec:parabolic-arbitrary}.

The structure of this paper is as follows. In Section \ref{sec:nodefols} we introduce nodefolds and conic degenerations, as well as representation and character varieties over them and their parabolic counterparts. Section \ref{sec:tqft} is devoted to the construction of the lax monoidal TQFT computing virtual classes of representation varieties over nodefolds. In particular, in Section \ref{sec:conic-deg-tqft} we shall exploit the properties of conic degenerations under the built TQFT to relate the virtual classes of representation varieties over a nodefold and over its normalization. Section \ref{sec:charvar-nodefolds} is focused on the analysis of the GIT quotient to get the virtual class of the associated character variety in the $G=\SL{2}(k)$ case. Section \ref{sec:charvar-nodefolds-nopar} is devoted to the non-parabolic case, while Sections \ref{sec:parabolic-jordan} and \ref{sec:parabolic-diagonal} focuses on the parabolic setting, with holonomies of Jordan and semi-simple type, respectively. Finally, in Section \ref{sec:parabolic-arbitrary} we show how these results can be put together to obtain the virtual class of parabolic $\SL{2}(k)$-character varieties with an arbitrary parabolic structure.

\subsection*{Acknowledgements}

The authors want to thank Carlos Florentino, Vicente Mu\~noz and Jaume Silva for very useful conversations. The first author acknowledges the hospitality of the Instituto de Ciencias Matem\'aticas in which part of this work was completed during a postdoctoral stay.

The first author has been partially supported by Severo Ochoa excellence programme SEV-2015-0554 and Spanish Ministerio de Ciencia e Innovaci\'on project PID2019-106493RB-I00.

\section{Nodefolds and representation varieties}\label{sec:nodefols}

In the following we introduce a new class of topological spaces that look like topological manifolds but with some special points around which they are no longer locally euclidean but locally a cone or even a bouquet of cones as in Figure \ref{img:cones}. We shall call this type of topological spaces nodefolds because they will serve us to describe Riemann surfaces corresponding to nodal curves. \footnote{The reader may think that other names like \textit{conifold} would be more appropriate than \textit{nodefolds}. The reader is right. However, sadly the term conifold is yet in use for a similar, but not equal, concept in the context of string theory \cite{candelas1990rolling} so we will not use it here to avoid confusions.}

\begin{defn}
A \emph{nodefold} $X$ is a second countable, paracompact topological space that is locally conic. The later means that, for any $p \in X$, there exists an open neighbourhood $U \subseteq X$, a non-empty closed manifold $B$ and a homeomorphism $\varphi: U \to \Cone{B}$ with $\varphi(p)=v_{B}$. Here
$$
	\Cone{B} = \left(B \times [0,1)\right)/(B \times \left\{0\right\})
$$
denotes the (topological) cone over $B$ and $v_{B} = [B \times \left\{0\right\}] \in \Cone{B}$ is the vertex of the cone. The dimension of the nodefold is then $\dim X = \dim B +1$.
\end{defn}


\begin{ex}
\begin{itemize}
	\item Observe that, if $B = S^{n-1}$, then $\Cone{S^{n-1}}$ is homeomorphic to the $n$-dimensional open ball. In that case, we will say that the point $p$ with $\varphi(p)=v_{B}$, is a \emph{smooth point}. Otherwise, we will call $p$ a \emph{conic point}. In this manner, all the (topological) manifolds have a natural nodefold structure. Moreover, observe that all the points of $\Cone{B} - \left\{v_B\right\}$ are smooth so the set of conic points of a nodefold $X$ is a discrete set, denoted $\conic X$.
	\item On the other hand, $B$ may not be connected with $r$ connected components. In that case, $\Cone{B}$ is genuinelly a cone with $r$ branches. For instance, if $B$ is the disjoint union of $r$ copies of $S^{n-1}$, then $\Cone{B}$ is the result of gluing $r$ cones along their vertices, as shown in Figure \ref{img:cones}.
	
\begin{figure}[h]
\includegraphics[width=11cm]{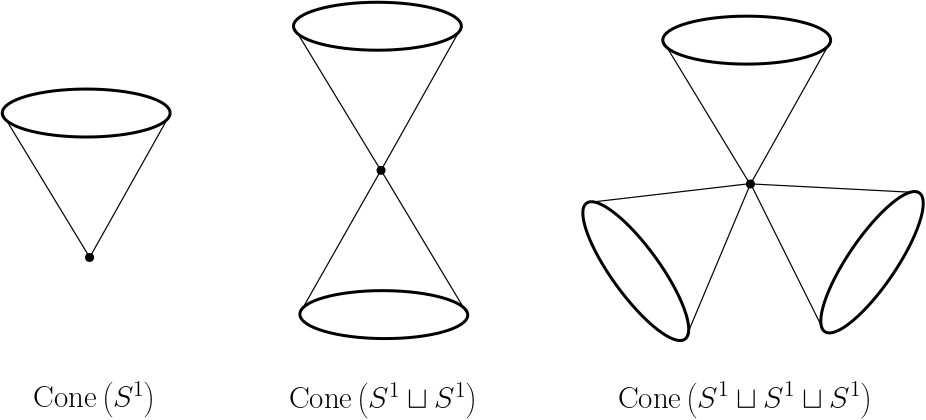}
\caption{Several cones over disjoint unions of $S^1$.}
\label{img:cones}
\end{figure}

\end{itemize}
\end{ex}

Analogously, a \emph{nodefold with boundary} $X$ is a second countable, paracompact topological space such that each point is either locally conic or a boundary point (i.e.\ locally homeomorphic to $\RR^n_+ = \left\{(x_1, \ldots, x_n) \in \RR^n\,|\, x_n \geq 0\right\}$). The boundary points of $X$ will be denoted by $\partial X$. Observe that $\partial X$ is a compact $(n-1)$-dimensional manifold. In this manner, all the conic points of $X$ belong to the interior $X - \partial X$.

\begin{defn}
Let $X_1$ and $X_2$ be nodefolds and let $f: X_1 \to X_2$ be a continuous map. We will say that $f$ is a \emph{conic degeneration} if there exists a finite subset $S \subseteq \conic X_2$ such that $f^{-1}(s)$ is finite for all $s \in S$ and $f: X_1 - f^{-1}(S) \to X_2 -S$ is a homeomorphism.
\end{defn}

\subsection{Representation varieties}\label{sec:representation-varieties}

As with usual topological manifolds, we can consider representation varieties over a nodefold. Fix an algebraic group $G$ over a certain algebraically closed field $k$. Suppose that $X$ is a compact nodefold (maybe with boundary). Then, the representation variety over $X$, $\Rep{G}(X)$, is the set of group representations
$$
	\rho: \pi_1(G) \to G.
$$
Recall that this set is naturally endowed with the structure of an algebraic variety as follows. Since $X$ is compact, we have a presentation of $\pi_1(G)$ with finitely many generators $\pi_1(G) = \langle \gamma_1, \ldots, \gamma_n \,|\, R_\alpha(\gamma_1, \ldots, \gamma_n) = 1\rangle$. This defines an injective map $\Psi: \Rep{G}(X) \to G^n$ given by $\Psi(\rho) = (\rho(\gamma_1), \ldots, \rho(\gamma_n))$. The image of $\Psi$ is an algebraic set of $G^n$ so we can put in $\Rep{G}(X)$ the algebraic structure induced by $\Psi$. In other words, we have
$$
	\Rep{G}(X) = \left\{(g_1, \ldots, g_n) \in G^n \,|\, R_\alpha(g_1, \ldots, g_n) = 1\right\}.
$$ 

Furthermore, we can extend this definition to consider several basepoints. Let $A \subseteq X$ be a finite set and let $\Pi(X,A)$ be the fundamental groupoid of $X$ with basepoints in $X$. The $G$-representation variety of $(X,A)$, $\Rep{G}(X,A)$, is the set of groupoid homomorphisms $\rho: \Pi(X,A) \to G$. It can be reduced to usual representation varieties as follows. Pick points $a_1, \ldots, a_m \in A$ in different connected components of $X$ and such that any other point of $A$ shares connected component with some of the points $a_i$. In this way, a groupoid representation $\rho: \Pi(X,A) \to G$ is completelly determined by the induced group representations $\pi_1(X, a_i) \to G$ together with the images of any $|A|-m$ joining the points of $A$ with the basepoint in the same component. These later images are unrestricted, so they can be any element of $G$, which gives rise to a natural identification
\begin{equation}\label{eq:rep-variety-general}
	\Rep{G}(X, A) = \prod_{i=1}^m \Rep{G}(X, a_i) \times G^{|A|-m}.
\end{equation}
Observe that each of the factors $\Rep{G}(X, a_i)$ is an usual representation variety, so $\Rep{G}(X, A)$ is naturally endowed with the structure of an algebraic variety too. For further details check \cite{GP-2018,GPLM-2017}

This construction inherits the functoriality properties from the fundamental groupoid. In this way, if $f: (X, A) \to (X', A')$ is a continuous map with $f(A) \subseteq A'$, it induces a regular morphism
$$
	\Rep{G}(f): \Rep{G}(X', A') \to \Rep{G}(X, A).
$$
In particular, if $X \subseteq X'$, the inclusion induces a restriction map $\Rep{G}(X', A') \to \Rep{G}(X, A' \cap X)$.

Finally, we can also consider parabolic structures on the representation variety. In analogy with \cite{GP-2018}, a parabolic structure on a nodefold $X$ (maybe with boundary) with basepoints $A$ is a finite set $Q = \left\{(S_1, \lambda_1), \ldots, (S_r, \lambda_r)\right\}$ such that
\begin{itemize}
	\item $S_i \subseteq X$ are pairwise disjoint co-oriented smooth submanifolds of codimension $2$ with $S_i \cap \conic X = S_i \cap A = \emptyset$ and such that all the submanifolds $S_i$ intersect $\partial X$ transversally.
	\item $\lambda_i \subseteq G$ are algebraic subvarieties that are invariant under the action of $G$ by conjugation. Typically, we will take $\lambda_i = [g_i]$ the conjugacy classes of some elements $g_i \in G$. 
\end{itemize}

\begin{rmk}
If $Q$ is a parabolic structure on a nodefold $(X, A)$ with boundary, there is naturally induced parabolic structure on $\partial X$, denoted $Q|_{\partial X}$, given by the collection of pairs $(S_i \cap \partial X, \lambda_i)$ for $(S_i, \lambda_i) \in Q$ with $S_i \cap \partial X \neq \emptyset$. Observe that it is crucial at this point that $S_i$ intersects $\partial X$ transversally, as imposed above. In a similar vein, if $M \subseteq \partial X$ is the disjoint union of some connected components of $\partial X$, we can also consider $Q|_M$.
\end{rmk}

Let us denote by $S = \bigcup_i S_i$ the total collection of submanifolds of the parabolic structure $Q$. Given a loop $\gamma \in \Pi(X - S, A)$, we will say that $\gamma$ is \emph{around $S_i$} if $\gamma \in \Ker  \iota_i$, where $\iota_i: \Pi(X - S, A) \to \Pi(X - (S - S_i), A)$ is the induced map by the inclusion. These loops can be easily described as those lying in the complement of the zero section of the normal bundle $\nu S_i$ to $S_i$ (embedded into $X$ as a tubular neighbourhood) that vanish when the zero section is added.

In other words, as proven in \cite{Smith:1978}, the loops around $S_i$ are the normal subgroup generated by any meridian (as normal subgroup). The meridians are, given a point $x \in S_i$, the usual generators of the fundamental group of the fiber $\nu_x S_i - \left\{0\right\} \sim S^1$ (see Figure \ref{img:meridian}). With this description, we will say that a meridian around $S_i$ is positive if it is positively oriented with respect to the orientation of the holed plane $\nu_x S_i - \left\{0\right\}$ given by the co-orientation of $S_i$.

\begin{figure}[h]
\includegraphics[width=5.5cm]{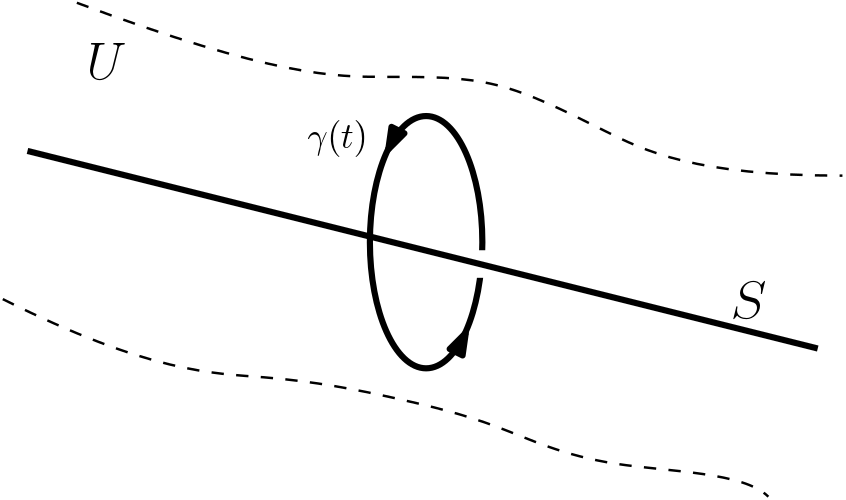}
\caption{Meridian around a codimension $2$ submanifold $S$.}
\label{img:meridian}
\end{figure}

Given such parabolic structure $Q$, we can consider the associated parabolic $G$-representation variety, denoted $\Rep{G}(X, A, Q)$. Then, $\Rep{G}(X, A, Q)$ is the set of groupoid homomorphisms $\rho: \Pi(X - S, A) \to G$ such that $\rho(\gamma) \in \lambda_i$ if $\gamma$ is a positive meridian around $S_i$. Again, choosing wisely the generators of $\Pi(X, A)$ (see \cite[Section 4]{GP-2018}), it is possible to give a natural decomposition of $\Rep{G}(X, A, Q)$ into a product of algebraic varieties, in the spirit of equation (\ref{eq:rep-variety-general}). In this manner, $\Rep{G}(X, A, Q)$ is also an algebraic variety.

\begin{rmk}
Observe that, since the collection of loops around $S_i$ are the normal subgroup generated by any positive meridian $\gamma$, the condition $\rho(\gamma) \in \lambda_i$ actually restricts the image of all these loops. Moreover, any two meridians are conjugated so, since the subvariety $\lambda_i$ is closed by conjugation, the parabolic condition does not depend on the chosen meridian.
\end{rmk}
 
Finally, suppose that $f: (X, A) \to (X', A')$ is a conic degeneration. Then, given a parabolic structure $Q' = \left\{(S_i', \lambda_i)\right\}$ in $(X',A')$, $f$ induces a parabolic structure $f^*Q = \left\{(f^{-1}(S_i), \lambda_i)\right\}$ in $(X, A)$. Moreover, if $Q$ is another parabolic structure on $(X, A)$ such that $Q \subseteq f^*Q$, the induced map of representation varieties actually preserves the parabolic structures giving rise to a map
$$
	\Rep{G}(f): \Rep{G}(X', A', Q') \to \Rep{G}(X, A, Q). 
$$
 
\subsection{Gluing nodefolds}

In this section, we discuss some gluing properties of nodefolds along boundaries and how do they affect to the associated representation varieties. In analogy with \cite{GPLM-2017}, this will allow us to define the field theory part of a TQFT (see Section \ref{sec:tqft}).

Consider two nodefolds with boundary $X$ and $X'$ and suppose that their boundaries share a common component, that it we can decompose the boundaries as $\partial X = M_1 \sqcup M_2$ and $\partial X' = M_2 \sqcup M_3$, as depicted in Figure \ref{img:gluing}. We can use this common boundary to glue $X$ and $X'$ together to give rise to a topological space $X \cup_{M_2} X'$. It is straightforward to check that $X \cup_{M_2} X'$ actually inherits a nodefold structure, and that the conic points satisfy $\conic\left(X \cup_{M_2} X'\right) = \conic X \sqcup \conic X'$. Moreover, the new nodefold has boundary $\partial \left(X \cup_{M_2} X'\right) = M_1 \sqcup M_3$.

\begin{figure}[h]
\includegraphics[width=12cm]{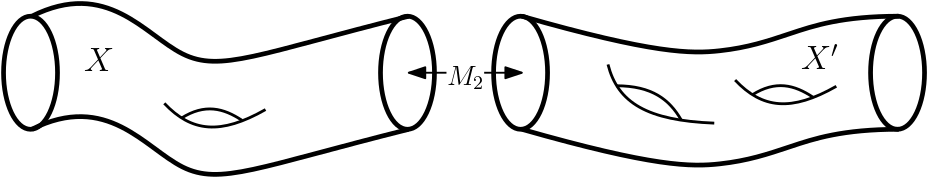}
\caption{Gluing two nodefolds along a common boundary.}
\label{img:gluing}
\end{figure}

\begin{prop}\label{lem:gluing} Suppose that $A \subseteq X$ and $A' \subseteq X'$ are finite subsets with $A \cap M_2 = A' \cap M_2$. Moreover, suppose that $A \cap M_2$ meets all the connected components of $M_2$. Then, the
following commutative diagram of groupoids induced by the natural inclusions
\[
\begin{displaystyle}
   \xymatrix
   {
    \Pi(X \cup_{M_2} X', A \cup A') &  \Pi(X', A') \ar[l] \\
   \Pi(X, A) \ar[u] & \Pi(M_2, A \cap M_2) \ar[u] \ar[l]\\
  	}
\end{displaystyle}
\]
is a pushout.
\end{prop}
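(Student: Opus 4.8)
The plan is to deduce the statement from the Seifert--van Kampen theorem for the fundamental groupoid in the form due to R.~Brown, which asserts that whenever a space $Y$ is the union of two open subsets $U,V$ and $B\subseteq Y$ is a set of basepoints meeting every path-component of $U\cap V$, the square of groupoids determined by the inclusions of $U\cap V$ into $U$ and $V$ is a pushout. The one genuine obstacle is that $X$ and $X'$ sit inside $Y := X\cup_{M_2} X'$ as \emph{closed}, not open, subsets, with $X\cap X' = M_2$; Brown's theorem cannot be applied to this decomposition directly, so the first task is to thicken $X$ and $X'$ into an honest open cover.

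To do this I would use a collar of the common boundary. Near $M_2$ each of the nodefolds $X$ and $X'$ is a topological manifold with boundary---the conic points lie in the interior, away from $\partial X$ and $\partial X'$---so M.~Brown's collaring theorem provides collars $M_2\times[0,1)\hookrightarrow X'$ and $M_2\times(-1,0]\hookrightarrow X$. After the gluing these fit together into an open bicollar $W\cong M_2\times(-1,1)$ of $M_2$ in $Y$. I then set
$$
U = X \cup \big(M_2\times[0,\tfrac12)\big), \qquad V = X' \cup \big(M_2\times(-\tfrac12,0]\big),
$$
which are open in $Y$, satisfy $Y = U\cup V$, and strongly deformation retract onto $X$ and $X'$ respectively (fixing $X$, resp.\ $X'$, pointwise); their intersection $U\cap V\cong M_2\times(-\tfrac12,\tfrac12)$ strongly deformation retracts onto $M_2$.

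Next I would fix the basepoint set $B = A\cup A'$ and shrink the collar so that it contains no points of $A\cup A'$ other than those already lying on $M_2$. Using $A\cap M_2 = A'\cap M_2$, this gives $B\cap U = A$, $B\cap V = A'$ and $B\cap(U\cap V) = A\cap M_2$. Because the three retractions above are strong deformation retractions fixing the relevant basepoints, the inclusions induce isomorphisms of groupoids $\Pi(X,A)\cong\Pi(U,A)$, $\Pi(X',A')\cong\Pi(V,A')$ and $\Pi(M_2, A\cap M_2)\cong\Pi(U\cap V, A\cap M_2)$; indeed a strong deformation retraction $r$ fixing a subspace restricts to an isomorphism between the full subgroupoids on a common object set, since applying $r$ to paths and to homotopies rel endpoints shows the induced map on each hom-set is a bijection.

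Finally I apply Brown's groupoid van Kampen theorem to the open cover $Y = U\cup V$ with basepoints $B$. Its hypothesis requires only that $B$ meet every path-component of $U\cap V$; since $U\cap V\simeq M_2$ and, by assumption, $A\cap M_2$ meets every connected component of $M_2$, this is exactly what is guaranteed. Hence the square with corners $\Pi(U\cap V, B\cap U\cap V)$, $\Pi(U,B\cap U)$, $\Pi(V, B\cap V)$ and $\Pi(Y,B)$ is a pushout, and transporting it along the three isomorphisms above yields precisely the pushout in the statement. I expect the point requiring the most care to be the representativity check together with the bookkeeping of basepoints: one must verify that path-components of $X$ or $X'$ that are disjoint from $M_2$ and contain no basepoint do not obstruct the conclusion---they contribute trivially and identically to all four groupoids, so they may be discarded---and that the collar retractions are genuinely basepoint-preserving, so that the groupoid identifications hold on the nose rather than merely up to equivalence.
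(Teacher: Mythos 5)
Your proposal is correct and follows essentially the same route as the paper: both thicken $X$ and $X'$ by open collars of $M_2$ to obtain an open cover of $X\cup_{M_2}X'$, apply Brown's Seifert--van Kampen theorem for fundamental groupoids (using that $A\cap M_2$ meets every component of $M_2$ for the representativity hypothesis), and then transport the resulting pushout along the deformation retractions back to $(X,A)$, $(X',A')$ and $(M_2,A\cap M_2)$. Your write-up is merely more explicit about the collaring theorem and the basepoint bookkeeping than the paper's version.
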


\begin{proof}
The proof is essentially Seifert-van Kampen theorem for fundamental groupoids, as proven in \cite{Brown:1967}. Take open collarings $U \subseteq X$ and $U' \subseteq X'$ around $M_2$ i.e.\ such that $U, U' \cong M_2 \times [0,1)$. Shrinking if necessary, we can also suppose that $A \cap U = A \cap M_2$ and $A' \cap U' = A' \cap M_2$.

Then, using Seifert-van Kampen theorem with the open sets $X \cup_{M_2} U'$ and $X' \cup_{M_2} U$ of $X \cup_{M_2} X'$, we get that the following diagram is a pushout
\[
\begin{displaystyle}
   \xymatrix
   {
    \Pi(X \cup_{M_2} X', A \cup A') &  \Pi(X' \cup_{M_2} U, A') \ar[l] \\
   \Pi(X \cup_{M_2} U', A) \ar[u] & \Pi(U \cup_{M_2} U', A \cap M_2) \ar[u] \ar[l]\\
  	}
\end{displaystyle}
\]
Now, the result follows using that $(X \cup_{M_2} U', A)$ has the homotopy type of $(X, A)$, $(X' \cup_{M_2} U, A')$ has the homotopy type of $(X', A')$, and $(U \cup_{M_2} U', A \cap M_2)$ has the homotopy type of $(M_2, A \cap M_2)$.
\end{proof}

Furthermore, since the functor $\Hom(-, G)$ is co-continuous, we can extend this result to representation varieties.

\begin{cor}
Under the hypotheses of Proposition \ref{lem:gluing}, for any algebraic group $G$, we have that the representation variety of the gluing is the fibered product
$$
	\Rep{G}(X \cup_{M_2} X', A \cap A') = \Rep{G}(X, A) \times_{\Rep{G}(M_2, A \cap M_2)} \Rep{G}(X', A').
$$
\end{cor}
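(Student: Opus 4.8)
The plan is to deduce the statement formally from Proposition~\ref{lem:gluing} by applying a contravariant representable functor. First I would recast representation varieties in categorical terms: by definition $\Rep{G}(X, A) = \Hom(\Pi(X, A), G)$ is the set of groupoid homomorphisms from the fundamental groupoid $\Pi(X,A)$ into $G$, the latter regarded as a groupoid with a single object whose morphism set is $G$. In this way $\Rep{G}(-, -)$ factors as the fundamental-groupoid functor $\Pi(-, -)$ followed by the contravariant functor $\Hom(-, G): \Grpd \to \Sets$. The key formal input is that this functor is representable and therefore sends colimits of groupoids to limits of sets; concretely, $\Hom(\operatorname{colim}_i \mathcal{C}_i, G) = \lim_i \Hom(\mathcal{C}_i, G)$. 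This is exactly the co-continuity remarked upon in the text immediately preceding the statement.

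Next I would apply this observation to the pushout square of Proposition~\ref{lem:gluing}. Since a pushout is a particular colimit in $\Grpd$, the functor $\Hom(-, G)$ carries it to the corresponding limit, namely the pullback of the two restriction maps
\[
\Rep{G}(X, A) \longrightarrow \Rep{G}(M_2, A \cap M_2) \longleftarrow \Rep{G}(X', A')
\]
induced by the inclusions $M_2 \hookrightarrow X$ and $M_2 \hookrightarrow X'$. Unwinding the universal property of the pullback yields the desired identification at the level of sets: a groupoid homomorphism on the glued nodefold is the same datum as a pair of homomorphisms on $X$ and on $X'$ whose restrictions to the common boundary $M_2$ coincide.

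The remaining, and genuinely only subtle, step is to upgrade this set-theoretic bijection to an isomorphism of algebraic varieties, since the corollary asserts an equality of representation \emph{varieties}. Here I would invoke the explicit presentations that endow each factor with its scheme structure, namely the closed embeddings $\Psi$ into finite powers of $G$ coming from a finite generating set, as in equation~\eqref{eq:rep-variety-general}. The restriction maps above are regular morphisms, so the fibered product on the right is a well-defined algebraic variety, cut out inside $\Rep{G}(X, A) \times \Rep{G}(X', A')$ by the equations imposing agreement on the $M_2$-generators. One then checks that, for a compatible choice of generators for $\Pi(X, A)$ and $\Pi(X', A')$ restricting to a common generating set of $\Pi(M_2, A \cap M_2)$, the bijection coincides with this scheme-theoretic fibered product. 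I expect this bookkeeping — producing generators of the three groupoids that are compatible under restriction — to be the main (though routine) technical point; the hypothesis that $A \cap M_2$ meets every connected component of $M_2$ is precisely what guarantees that such compatible generators exist.
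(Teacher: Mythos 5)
Your proposal is correct and follows essentially the same route as the paper, which deduces the corollary in one line from Proposition \ref{lem:gluing} via the co-continuity of $\Hom(-,G)$, i.e.\ the fact that this contravariant functor sends the pushout of fundamental groupoids to a pullback. Your additional care in upgrading the set-theoretic bijection to an isomorphism of algebraic varieties via compatible generating sets is a point the paper leaves implicit, but it is the same argument in substance.
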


These results can be easily extended to the parabolic setting. Suppose that we fix an algebraic group $G$ and that $Q$ and $Q'$ are parabolic structures on two nodefolds $(X, A)$ and $(X', A')$. Suppose also that we want to glue $X$ and $X'$ along a common boundary $M_2$ as above. This gluing can be extended to the parabolic structure provided that the induced parabolic structures agree $Q|_{M_2} = Q'|_{M_2}$, giving rise to a new parabolic structure on $X \cup_{M_2} X'$, denoted $Q \cup_{M_2} Q'$. Roughly speaking, it is given by gluing together the submanifolds $S_i$, $S_j'$ along the boundary $M_2$, and labelling the result with $\lambda_i = \lambda_j'$.

At the level of representation varieties, observe that this construction is compatible with the description of $\Rep{G}(X \cup_{M_2} X', Q \cup_{M_2} Q')$ as a product variety, in the spirit of Equation (\ref{eq:rep-variety-general}). In this way, we obtain also the following result.

\begin{cor}\label{cor:gluing-parabolic}
In the aforementioned hypothesis, for any algebraic group $G$, the parabolic $G$-representation variety of the gluing is the fibered product
$$
	\Rep{G}(X \cup_{M_2} X', A \cup A', Q \cup_{M_2} Q') = \Rep{G}(X, A, Q) \times_{\Rep{G}(M_2, A \cap M_2, Q|_{M_2})} \Rep{G}(X', A', Q').
$$
\end{cor}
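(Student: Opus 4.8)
The plan is to reduce the parabolic assertion to the non-parabolic gluing corollary by passing to the complements of the parabolic submanifolds and then re-imposing the meridian conditions. Write $S = \bigcup_i S_i$ and $S' = \bigcup_j S_j'$ for the total submanifolds of $Q$ and $Q'$. Since the induced parabolic structures agree, $Q|_{M_2} = Q'|_{M_2}$, the traces on the common boundary coincide, $S \cap M_2 = S' \cap M_2$, together with their holonomy labels. Hence the glued collection $\tilde{S} = S \cup_{M_2} S'$ is a well-defined parabolic submanifold of $X \cup_{M_2} X'$ realizing $Q \cup_{M_2} Q'$, and the complements satisfy
$$
	(X \cup_{M_2} X') - \tilde{S} = (X - S) \cup_{M_2'} (X' - S'), \qquad M_2' := M_2 - (S \cap M_2).
$$
By the very definition of the parabolic representation variety, $\Rep{G}(X, A, Q)$ is the closed subvariety of $\Rep{G}(X - S, A)$ cut out by the conditions $\rho(\gamma) \in \lambda_i$ for every positive meridian $\gamma$ around $S_i$, and likewise for the remaining pieces.

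First I would apply the non-parabolic gluing result to these complements. The spaces $X - S$ and $X' - S'$ are nodefolds (now non-compact) sharing the common boundary component $M_2'$, and the basepoint sets still satisfy $A \cap M_2' = A' \cap M_2'$ and meet every connected component of $M_2'$, which is inherited from $A \cap M_2 = A' \cap M_2$ together with $S_i \cap A = \emptyset$. Thus Proposition \ref{lem:gluing} yields a pushout of fundamental groupoids, and applying the co-continuous functor $\Hom(-, G)$ exactly as in the corollary following it gives
$$
	\Rep{G}\!\left((X \cup_{M_2} X') - \tilde{S},\, A \cup A'\right) = \Rep{G}(X - S, A) \times_{\Rep{G}(M_2', A \cap M_2)} \Rep{G}(X' - S', A').
$$

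Next I would re-impose the meridian conditions and check they are distributed coherently across the seam. In the pushout every positive meridian of a component of $\tilde{S}$ is conjugate to one supported in a single piece: meridians around components lying in the interior of $X$ (resp.\ $X'$) are carried by $\Pi(X - S, A)$ (resp.\ $\Pi(X' - S', A')$), while meridians around components meeting $M_2$ are carried by $\Pi(M_2', A \cap M_2)$ and are identified on both sides under the pushout maps. Imposing $\rho(\gamma) \in \lambda_i$ on the interior-$X$ meridians restricts the first factor to $\Rep{G}(X, A, Q)$, on the interior-$X'$ meridians restricts the second to $\Rep{G}(X', A', Q')$, and on the $M_2$-meridians restricts the base to $\Rep{G}(M_2, A \cap M_2, Q|_{M_2})$, the latter condition being the common pullback along both restriction maps. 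Since $\lambda_i = \lambda_j'$ whenever two submanifolds are glued along $M_2$, these constraints are mutually consistent, and cutting the previous display by them produces precisely the fibered product of parabolic varieties in the statement.

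The main obstacle is the bookkeeping of meridians along the seam $M_2$: one must verify that a positive meridian around a component of $\tilde{S}$ crossing $M_2$ is the same loop — after the identifications of the pushout and matching co-orientations — whether it is viewed from the $X$ side, from the $X'$ side, or inside $M_2'$, so that the single holonomy condition $\lambda_i = \lambda_j'$ is imposed only once and coherently. This is exactly where the transversality hypothesis (each $S_i$ meets $\partial X$ transversally) and the agreement $Q|_{M_2} = Q'|_{M_2}$ enter: transversality ensures the normal disc bundle of $S_i$, and hence its meridian, restricts to a genuine meridian in $M_2'$ and that the collar neighbourhoods used in Proposition \ref{lem:gluing} persist after deleting $S$, while the agreement of induced structures guarantees the holonomy labels coincide. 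Once this compatibility is in place, the remainder is the same functoriality and co-continuity argument already used in the non-parabolic case.
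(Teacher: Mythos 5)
Your argument is correct and is essentially a detailed expansion of the paper's own (one-sentence) justification: the paper likewise reduces to the non-parabolic gluing statement, observing that the parabolic representation variety sits inside the representation variety of the complement of the parabolic locus cut out by conjugation-invariant meridian conditions, and that this description is compatible with the gluing. Your careful bookkeeping of the meridians crossing the seam $M_2$, and your identification of where transversality and the hypothesis $Q|_{M_2} = Q'|_{M_2}$ are used, supplies precisely the details the paper leaves implicit.
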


\section{TQFTs for representation varieties over nodefolds}\label{sec:tqft}

In this section, we shall construct a lax monoidal Topological Quantum Field Theory (TQFT) computing the virtual class in the Grothendieck ring of algebraic varieties, of representation varieties over nodefolds. This construction extends the TQFT built in \cite{GPLM-2017} for topological manifolds.

\subsection{Grothendieck ring of algebraic varieties}\label{sec:grothendieck-ring}

From now on, we will work on a fixed algebraically closed field $k$. Let $\Var{k}$ be the category of algebraic varieties over $k$, that is of integral separated schemes of finite type over $k$, with regular morphisms between them. Together with the disjoint union of varieties and their cartesian product (more preciselly, their fibered product over $k$), the isomorphism classes of objects of $\Var{k}$ form a semi-ring.

From it we can construct the so-called \emph{Grothendieck ring} of algebraic varieties, or $K$-theory ring, denoted $\K{\Var{k}}$. Explicitly, $\K{\Var{k}}$ is the ring generated by the isomorphism classes of algebraic varieties, denoted $[Z]$ for $Z \in \Var{k}$ and usually referred to as the \emph{virtual class} of $Z$, subject to the relations
$$
	[Z_1 \sqcup Z_2] = [Z_1] + [Z_2], \quad [Z_1 \times Z_2] = [Z_1] \cdot [Z_2].
$$
Indeed, combining both relations, we can get a slightly more general multiplicativity property. 

\begin{prop}
Suppose that $\pi: Z \to B$ is a locally trivial fibration of algebraic varieties in the Zariski topology with fiber $F$. Then, we have
$
	[Z] = [B][F].
$
\end{prop}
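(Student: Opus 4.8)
The plan is to reduce the statement to the two defining relations of the Grothendieck ring by stratifying the base $B$ into pieces over which the fibration $\pi: Z \to B$ is genuinely (Zariski-)trivial, and then exploiting additivity of the virtual class. First I would invoke the definition of a locally trivial fibration in the Zariski topology: there exists a finite open cover $\{U_\alpha\}$ of $B$ together with isomorphisms $\pi^{-1}(U_\alpha) \cong U_\alpha \times F$ over $U_\alpha$. Since $B$ is of finite type over $k$, it is Noetherian, so we may assume this cover is finite.

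The next step is to convert the open cover into a \emph{locally closed stratification} of $B$, which is the natural setting for additivity in $\K{\Var{k}}$. Starting from the finite cover $\{U_\alpha\}_{\alpha=1}^{N}$, I would set $B_1 = U_1$ and inductively $B_\alpha = U_\alpha \setminus (U_1 \cup \cdots \cup U_{\alpha-1})$, so that $B = \bigsqcup_\alpha B_\alpha$ is a disjoint union of locally closed subvarieties, each contained in some $U_\alpha$. Over each $B_\alpha$ the restriction of $\pi$ is trivial, because $B_\alpha \subseteq U_\alpha$ and the trivialization $\pi^{-1}(U_\alpha) \cong U_\alpha \times F$ restricts to an isomorphism $\pi^{-1}(B_\alpha) \cong B_\alpha \times F$. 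Applying the relation for disjoint unions (noting that locally closed subsets stratify $Z$ compatibly, so $[Z] = \sum_\alpha [\pi^{-1}(B_\alpha)]$) together with the product relation on each stratum gives
\[
	[Z] = \sum_\alpha [\pi^{-1}(B_\alpha)] = \sum_\alpha [B_\alpha \times F] = \sum_\alpha [B_\alpha]\,[F] = \left(\sum_\alpha [B_\alpha]\right)[F] = [B]\,[F].
\]

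The main obstacle, and the point requiring care, is justifying that additivity of the virtual class applies to a locally closed stratification and not merely to a disjoint union of honest subvarieties. In the Grothendieck ring one has the scissor relation $[Z] = [Y] + [Z \setminus Y]$ for $Y \subseteq Z$ closed, and iterating this over the stratification $\{B_\alpha\}$ (pulled back to $Z$ via $\pi$) yields the desired decomposition $[Z] = \sum_\alpha [\pi^{-1}(B_\alpha)]$. I would therefore organize the argument so that each $B_\alpha$ is obtained by removing a closed subvariety from an open one, allowing repeated application of the scissor relation; the finiteness of the cover guarantees the induction terminates. A minor technical wrinkle is that the strata $B_\alpha$ and $\pi^{-1}(B_\alpha)$ need to be treated as elements of $\K{\Var{k}}$ even when they are not irreducible, but this is standard since the virtual class extends to arbitrary finite-type schemes via the scissor relations.
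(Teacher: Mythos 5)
Your proof is correct and follows essentially the same strategy as the paper: cover $B$ by trivializing Zariski opens, apply multiplicativity on each piece, and sum up via additivity. In fact your version is more careful than the paper's, whose displayed computation writes $[Z]=\bigl[\bigsqcup_i \pi^{-1}(U_i)\bigr]$ as if the opens were disjoint; your refinement of the cover into a locally closed stratification $B_\alpha = U_\alpha \setminus (U_1\cup\cdots\cup U_{\alpha-1})$ is exactly the standard step needed to make that line rigorous.
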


\begin{proof}
Choose an open cover $\left\{U_i\right\}$ of $B$ such that $\pi|_{U_i}$ is trivial. Then, by standard multiplicativity, we have that $[\pi^{-1}(U_i)] = [U_i \times F] = [U_i][F]$. In this way, coming back to the global situation we have
$$
	[Z] = \left[\bigsqcup_{i} \pi^{-1}(U_i)\right] = \sum_{i} \left[\pi^{-1}(U_i)\right] = \sum_{i} \left[U_i\right] [F] = [F] \sum_{i} \left[U_i\right] = [F][B].
$$
\end{proof}

An important element in $\K{\Var{k}}$ is the class of the affine line, typically denoted $q = [k]$ and called the \textit{Lefschetz motive}. The notation $\mathbb{L} = [k]$ is also very standard, specially in motivic theory, but we will avoid it in this paper.

\begin{rmk}\label{rmk:Kvar-integral}
Despite its simple definition, very little is known about the ring structure of $\K{\Var{k}}$, even in the complex case. For almost fifty years, it was conjectured that it is an integral domain. Finally, the answer is no but, more strikingly, the Lefschetz motive is a zero divisor \cite{borisov2018class}. Note that this captures very subtle properties of algebraic varieties, namely that there exists non-isomorphic algebraic varieties $Z_1, Z_2 \in \K{\Var{k}}$ such that $Z_1 \times k$ and $Z_2 \times k$ are isomorphic.

Through this paper, we will need to divide by $q$, $q+1$ and $q-1$ several times. For this reason, instead of working in $\K{\Var{k}}$, we will work on the localization of this ring by the multiplicative set generated by $q$, $q+1$ and $q-1$. The requirement of this localization will be clear from the context so, to lighten notation, we will denote this localization also by $\K{\Var{k}}$. Observe that this localization has the effect that, all the zero divisor partners of $q$, $q+1$ and $q-1$ are annihilated.
\end{rmk}

In the complex case $k = \CC$, $\K{\Var{\CC}}$ extends some Hodge-theoretic invariants called the $E$-polynomial (a.k.a. Deligne-Hodge polynomial). Given a complex algebraic variety $Z$, its rational compactly supported cohomology, $H_c^k(Z, \QQ)$, is endowed with a natural mixed Hodge structure given by two filtrations: an increasing weigh filtration $W_\bullet$ and a decreasing Hodge filtration $F^\bullet$. From them, we can consider the so-called \emph{Hodge numbers} as
$$
	h^{k;p,q}_c(Z) = \dim \mathrm{Gr}_F^{p} \left(\left(\mathrm{Gr}^W_{p+q} H_c^k(Z, \QQ)\right) \otimes_\QQ \CC\right),
$$
where $\mathrm{Gr}$ denoted the graded complex of a filtration. These numbers can be collected in the \emph{$E$-polynomial}
$$
	e(Z)(u,v) = \sum_{k} (-1)^kh^{k;p,q}_c(Z)\,u^{p}v^q,
$$
which is a polynomial in $\ZZ[u,v]$. By additivity of alternating sums of complexes and K\"unneth isomorphism, we get that the $E$-polynomial defines a semi-ring homomorphism $e: \Var{k} \to \ZZ[u,v]$ so it can be extended to a ring homomorphism $e: \K{\Var{k}} \to \ZZ[u,v]$.

In particular, under this morphism we have that $e(q) = e([k]) = uv$. Moreover, if $[Z] \in \K{\Var{k}}$ lies in the subring generated by the Lefschetz motive, that is, $[Z] = P(q)$ for some polynomial $P \in \ZZ[q]$, then the previous computation shows that $e([Z]) = P(uv)$. In this vein, for algebraic varieties whose virtual class is generated by $q$, the virtual class extends the $E$-polynomial.

\begin{rmk}
If $[Z]$ is generated by $q$, then $e(Z)$ is a polynomial in the variable $uv$, which means that $h^{k;p,q}_c(Z) = 0$ whenever $p \neq q$. In general, the varieties with such vanishing of Hodge numbers are called \emph{balanced}. In this situation, it is customary to write the $E$-polynomial in the variable $q = uv$, which justifies our notation for the Lefschetz motive.
\end{rmk}

The aim of this paper is to construct a TQFT computing the virtual classes of $G$-representation varieties over nodefolds. Moreover, we will focus on the case $G = \SL{2}(k)$ where we shall perform all the calculations explicitly. For this reason, it is important to understand the virtual classes of some related groups.
\begin{itemize}
	\item $[\GL{2}(k)] = q^4 - q^3 - q^2 + q$. This follows by noticing that we have a locally trivial fibration in the Zariski topology $\GL{2}(k) \to k^2 - \left\{(0,0)\right\}$ given by $A \mapsto Ae_1$ where $e_1$ is the first vector of the canonical basis of $k^2$. The fiber of this map is the set of vectors of $k^2$ that do not lie in the line spanned by $Ae_1$. In this way
	$$
		[\GL{2}(k)] = \left[k^2 - \left\{(0,0)\right\}\right]\left[k^2 - k\right] = (q^2-1)(q^2-q).
	$$
	Similar expressions for the virtual class of $[\GL{n}(k)]$ for $n \geq 2$ can be obtained with a similar argument.
	\item $[\PGL{2}(k)] = q^3-q$. Consider the quotient map $\GL{n}(k) \to \PGL{n}(k) = \GL{n}(k)/k^*$, with $k^* = k-\left\{0\right\}$. This is a locally trivial fibration in the Zariski topology with fiber $k^*$, so $[\PGL{n}(k)] = [\GL{n}(k)]/(q-1)$. Using the formula above for $[\GL{2}(k)]$ the result follows.
	\item $[\SL{2}(k)] = q^3-q$. The argument is similar to the one of $\PGL{2}(k)$ but, now, instead of considering the quotient map, we consider the map $\GL{n}(k) \to \SL{n}(k)$ that, to a matrix $A \in \GL{n}(k)$ assignes the same matrix but with the first column divided by $\det(A) \neq 0$. Again, this is a locally trivial map in the Zariski topology with fiber $k^*$.
\end{itemize}

\subsection{Functoriality of Grothendieck rings}

The previous construction of the Grothendieck ring of algebraic varieties can be performed relatively to a base variety. In this scenario, new functoriality properties arise that will be very useful for our construction of a TQFT. 

Given an algebraic variety $S$, let us denote by $\Varrel{S}$ the category of algebraic varieties over $S$. Explicitly, this category has, as objects regular morphisms $Z \to S$ and its morphisms are regular maps $Z \to Z'$ preserving the base maps. In particular, if $S = \star$ is the singleton variety then $\Varrel{\star} = \Var{k}$ is the usual category of algebraic varieties.

Again, together with the disjoint union 
$\sqcup$ of algebraic varieties, and the fibered product $\times_S$ over $S$, 
we may consider its associated Grothendieck ring $\K{\Varrel{S}}$. The element of $\K{\Varrel{S}}$ induced by a morphism $h: Z \to S$ will be denoted as $[(Z, h)]_S \in \K{\Varrel{S}}$, or just $[Z]_S$ when the map is clear from the context. In this notation, the unit of $\K{\Varrel{S}}$ is $\Unit{S} = [S, \Id_S]_S$.

This construction exhibits some important functoriality properties that will be useful for our construction. Suppose that $f: S_1 \to S_2$ is a regular morphism. It induces:
\begin{itemize}
	\item A ring homomorphism $f^*:\K{\Varrel{S_2}} \to \K{\Varrel{S_1}}$ given by $f^*[Z]_{S_2} = [Z \times_{S_2} S_1]_{S_1}$. In particular, taking the projection map $i: S \to \star$ we get a ring homomorphism $i^*: \K{\Var{k}} \to \K{\Varrel{S}}$ that endows the rings $\K{\Varrel{S}}$ with a natural structure of $\K{\Var{k}}$-module that corresponds to the cartesian product.
	\item A $\K{\Var{k}}$-module homomorphism $f_!: \K{\Varrel{S_1}} \to \K{\Varrel{S_2}}$ given by $f_![(Z, h)]_{S_1} = [(Z, f \circ h)]_{S_2}$. In general $f_!$ is not a ring homomorphism but, for $[Z_1] \in \K{\Varrel{S_1}}$ and $[Z_2] \in \K{\Varrel{S_2}}$, the projection formula $f_!([Z_2] \times_{S_2} f^*[Z_1]) = f_![Z_2] \times_{S_1} [Z_1]$ holds, which implies that $f_!$ is a $\K{\Var{k}}$-module homomorphism.
\end{itemize} 

The induced morphisms are functorial, in the sense that $(g \circ f)^* = f^* \circ g^*$ and $(g \circ f)_! = g_! \circ f_!$. In particular, if $i: T \hookrightarrow S$ is an inclusion, then $i^*f^* = f|_{T}^*$.

\subsection{Construction of the TQFT}

The notion of TQFT is very useful for computing algebraic invariants attached to manifolds. Recall from \cite{Atiyah:1988} that a $n$-dimensional Topological Quantum Field Theory (TQFT) is a monoidal symmetric functor
$$
	Z: \Bord{n} \to \Mod{R}.
$$
Here, $\Bord{n}$ is the category of $n$-dimensional bordisms, that is, the category whose objects are closed manifolds of dimension $n-1$ and a morphism $X: M_1 \to M_2$ is a bordism class between $M_1$ and $M_2$ i.e.\ a compact $n$-dimensional manifold with $\partial X = M_1 \sqcup M_2$ up to boundary-preserving homeomorphism. It becomes a monoidal category with disjoint union (of manifolds and morphisms). The target category, $\Mod{R}$, is the usual category of $R$-modules and homomorphisms, with monoidal structure given by tensor product. There exists in the literature many constructions of TQFT that had provided deep insight in different areas, like  \cite{Carlsson-Rodriguez-Villegas,Diaconescu:2017,Hausel-Letellier-Villegas:2013,Mozgovoy:2012}.

\begin{rmk}
Due to the physical inception of many of these TQFTs, it is customary to impose that the objects of $\Bord{n}$ are smooth oriented manifolds and the bordisms between them are also smooth and oriented, in such a way that the orientation agrees with the ones at the boundaries. However, we will not this these extra structures in this paper.
\end{rmk}

In \cite{GP-2018} (see also \cite{GPLM-2017}), it was constructed a sort of TQFT to compute virtual classes of representation varieties over closed manifolds. Here, we will extend such construction to nodefolds. For that purpose, first we need to enlarge the category $\Bord{n}$ to also include nodefolds, as well as some extra pieces of data needed for defining representation varieties.

From now on, we fix an algebraic group $G$ and a collection $\Lambda$ of subvarieties of $G$ invariants under conjugation (typically, $\Lambda$ is a collection of conjugacy classes).

\begin{defn}
Let $n \geq 1$. We define the \emph{category of $n$-dimensional pairs of nodefold bordisms with parabolic data $\Lambda$}, $\NBord{n}{\Lambda}$, as the $2$-category given by:
\begin{itemize}
	\item Objects: The objects are triples $(M, A, Q)$, where $M$ is a closed manifold of dimension $n-1$, $A \subseteq M$ is a finite set of points meeting each connected component of $M$ and $Q$ is a $G$-parabolic structure on $(M, A)$. The empty manifold, $\emptyset$, is also an object.
	\item Morphisms: A morphism $(M_1, A_1, Q_1) \to (M_2, A_2, Q_2)$ is a triple $(X, A, Q)$ where $X$ is a $n$-dimensional nodefold with $\partial X = M_1 \sqcup M_2$, $A \subseteq X$ is a finite set of points meeting each connected component of $X$ and with $A \cap M_1 = A_1$ and $A \cap M_2 = A_2$, and $Q$ is a parabolic structure on $X$ with $Q|_{M_1} = Q_1$ and $Q|_{M_2} = Q_2$. Morphisms are defined up to homotopy equivalence, that is, $(X, A, Q)$ and $(X', A', Q')$ are declared to be equal if there exists homotopy equivalences $f: (X,A) \to (X',A')$ and $g: (X', A') \to (X, A)$ such that $f^*Q' = Q$, $g^*Q = Q'$ and the homotopies $g \circ f \sim \Id_{X}$ and $f \circ g \sim \Id_{X'}$ also preserve $Q$ and $Q'$.
	
	 In addition, we allow an object $(M, A, Q)$ to be seen as a morphism $(M,A,Q): (M,A,Q) \to (M,A,Q)$ (as a very thin bordism), and we decrete it as the identity morphism. Composition of morphisms is given by gluing.
	 \item $2$-Morphisms: A $2$-morphism between bordisms $(X, A, Q)$ and $(X', A', Q')$ is given by a conic degeneration $f: X \to X'$ with $Q \subseteq f^*Q'$. Vertical composition is given by composition of the degenerations. 
\end{itemize}
\end{defn}
 
With this definition, $\NBord{n}{\Lambda}$ also has a natural monoidal structure given by disjoint union.

Another important category is the category of algebras with twists. Let $R$ be a fixed commutative and unitary ring. Let $M$ and $N$ be two $R$-algebras and suppose that $f,g: M \to N$ are two homomorphisms as $R$-modules. We say that $g$ is an \emph{immediate twist} of $f$ if there exists $R$-algebras $L$ and $L'$, an algebra homomorphism $\alpha^{alg}: L \to L'$ and a $R$-module homomorphism $\alpha^{mod}: L' \to L$ such that the following diagram commutes.
\[
\begin{displaystyle}
   \xymatrix
   { && L \ar[rrd]^{f^{mod}} \ar@/_0.8pc/[dd]_{\alpha^{alg}}  && \\
   M \ar[rru]^{f^{alg}}\ar[rrd]_{g^{alg}} &&  && N \\
   && L'\ar[rru]_{g^{mod}} \ar@/_0.8pc/[uu]_{\alpha^{mod}} &&
   }
\end{displaystyle}   
\]
Here, $f^{alg}$ and $g^{alg}$ are $R$-algebra homomorphisms, $f^{mod}$ and $g^{mod}$ are $R$-modules homomorphisms and $f = f^{mod} \circ f^{alg}$ and $g = g^{mod} \circ g^{alg}$.

In general, given $f,g: M \to N$ two $R$-module homomorphisms, we say that $g$ is a \emph{twist} of $f$ if there exists a finite sequence $f=f_0, f_1, \ldots, f_r = g: M \to M$ of homomorphisms such that $f_{i+1}$ is an immediate twist of $f_i$ for $0 \leq i \leq r-1$.


In that case, we define the \emph{category of $R$-algebras with twists}, $\Modt{R}$, as the category whose objects are $R$-algebras, its $1$-morphisms are $R$-modules homomorphisms and, given homomorphisms $f$ and $g$, a $2$-morphism $f \Rightarrow g$ is a twist from $f$ to $g$. Composition of $2$-cells is juxtaposition of twists. With this definition, $\Modt{R}$ has a $2$-category structure. Moreover it is a monoidal category with the usual tensor product. 

\begin{defn}
Fix a commutative ring with unit, $R$. A lax monoidal $2$-functor
$$
	Z: \NBord{n}{\Lambda} \to \Mod{R}_t
$$
is called a conic lax monoidal TQFT.
\end{defn}

\begin{rmk}
Recall that lax monoidality means that $Z$ preserves the unit of the monoidal structure, $Z(\emptyset) = R$, but we only have a morphism
$$
	Z(M, A, Q) \otimes_R Z(M', A', Q') \to Z\left((M, A, Q) \sqcup (M', A', Q')\right)
$$
which is no longer an isomorphism.
\end{rmk}

The aim of this section is to construct a conic lax monoidal TQFT computing virtual classes of representation varieties. It will be constructed as a composition of two $2$-functors
$$
	\NBord{n}{\Lambda} \stackrel{\cF_G}{\longrightarrow} \Span{\Var{k}} \stackrel{\cQ}{\longrightarrow} \Mod{\K{\Var{k}}}.
$$

\begin{rmk}
Recall that $\Span{\Var{k}}$ is the $2$-category of spans of algebraic varieties. More precisely, the objects of this category are algebraic varieties and a morphism between algebraic varieties $Z$ and $Z'$ is a span of regular maps of the form

$$
\xymatrix{
&\ar[dl]_{f} S \ar[dr]^{g}&\\
Z & & Z'
}
$$
Given two spans $(S_1, f_1, g_1): Z \to Z'$ and $(S_2, f_2, g_2): Z' \to Z''$ its composition is given by pullback. Explicitly, we define $(Z_2, f_2, g_2) \circ (Z_1, f_1,g_1) = (S_1 \times_{Z'} S_2, f_1 \circ f_2', g_2 \circ g_1')$, where $f_2', g_1'$ are the morphisms in the pullback diagram
$$
\xymatrix{
&&\ar[dl]_{f'_2}S_1 \times_{Z'} S_2\ar[dr]^{g'_1}&&\\
&\ar[dl]_{f_1}S_1\ar[dr]^{g_1}&&\ar[dl]_{f_2}S_2\ar[dr]^{g_2}&\\
Z&&Z'&&Z''
}
$$

A $2$-morphisms of $\Span{\Var{k}}$ between spans $(f,g,S), (f',g',S'): Z_1 \to Z_2$ is given by a regular morphism $\alpha: S' \to S$ such that the following diagram commutes

\[
\begin{displaystyle}
   \xymatrix
   {	& S' \ar[rd]^{g'} \ar[ld]_{f'} \ar[dd]^\alpha & \\
   		Z &  & Z_2\\
	&S  \ar[ru]_{g} \ar[lu]^{f}& 
   }
\end{displaystyle}   
\]

Finally, $\Span{\Var{k}}$ also exhibits a natural monoidal structure given by usual cartesian product of varieties and regular morphisms.
\end{rmk}

The functor $\cQ: \Span{\Var{k}} \to \Mod{\K{\Var{k}}}_t$ is called the \emph{quantization} of the TQFT. It is a lax monoidal functor that was constructed in \cite{GP-2018}. Roughly speaking, it is the motivic version of a Fourier-Mukai transform with identity kernel that assigns:
\begin{itemize}
	\item Objects: Given an algebraic variety $X \in \Span{\Var{k}}$, it assigns $\cQ(X) = \K{\Varrel{X}}$, the Grothendieck ring of the category of algebraic varieties over $X$, seen as a $\K{\Var{k}}$-module.
	\item Morphisms: Given a span $(f,g,S): X \to Y$, it associates the $\K{\Var{k}}$-module homomorphism $g_! \circ f^*: \K{\Varrel{X}} \to \K{\Varrel{Y}}$.
	\item $2$-Morphisms: To a $2$-morphism $\alpha: (f,g,S) \Rightarrow (f',g',S')$, it assigns the twisting immediate twisting $(\alpha^* = \alpha^{alg}, \alpha_! = \alpha^{mod}): g_! \circ f^* \Rightarrow g'_! \circ f'^*$.
\end{itemize}

With respect to the functor $\cF_G: \NBord{n}{\Lambda} \to \Span{\Var{k}}$, usually referred to as the \emph{field theory}, the construction works as follows:
\begin{itemize}
	\item Objects: To an object $(M, A, Q)$, it assigns the associated parabolic representation variety $\Rep{G}(M, A, Q)$.
	\item Morphisms: Given a nodefold bordism $(X, A, Q): (M_1, A_1, Q_1) \to (M_2, A_2, Q_2)$, let us denote by $j_1: (M_1, A_1, Q_1) \hookrightarrow (X, A, Q)$ and $j_2: (M_2, A_2, Q_2) \hookrightarrow (X, A, Q)$ the inclusion maps as boundaries. Then we associate to this the span
	
$$
\xymatrix{
&\ar[dl]_{i_2} \Rep{G}(W, A, Q) \ar[dr]^{i_2}&\\
\Rep{G}(M_1, A_1, Q_1) && \Rep{G}(M_2, A_2, Q_2)
}
$$
where $i_1, i_2$ are the maps induce by the inclusions $j_1, j_2$ at the level of representation varietis.

	\item $2$-Morphisms: To a $2$-morphism $(X, A, Q) \to (X', A', Q')$ given by a conic degeneration $f: X \to X'$, we associate the regular map $\Rep{G}(f): \Rep{G}(X', A', Q') \to \Rep{G}(X, A, Q)$. Observe that, by construction, this map interwines with the inclusion morphisms.
\end{itemize}

By its very definition, the assignment $\cF_G$ commutes with vertical composition. Moreover, by Corollary \ref{cor:gluing-parabolic}, $\cF_G$ also commutes with horizontal composition. Finally, since $\Rep{G}((X, A, Q) \sqcup (X', A',Q')) = \Rep{G}((X, A, Q) \times \Rep{G}(X', A',Q'))$, the functor $\cF_G$ is monoidal.

Set $Z_G = \cQ \circ \cF_G: \NBord{n}{\Lambda} \to \Mod{\K{\Var{k}}}$, which is a lax monoidal $2$-functor. Let $(X, A, Q): \emptyset \to \emptyset$, which is given by an $n$-dimensional compact nodefold $X$ without boundary. Observe that $\Rep{G}(\emptyset) = \star$ is the singleton variety, so denoting by $c: \Rep{G}(X, A, Q) \to \star$ the projection map, we have that
$$
	Z_G(X, A, Q) = \cQ \circ \cF_G(X, A, Q) = \cQ\left(\star \stackrel{c}{\leftarrow} \Rep{G}(X, A, Q) \stackrel{c}{\rightarrow} \Rep{G}(\star) = 1\right) = c_!c^*.
$$
In particular, if we apply this map to the unit $[\star]_\star \in \K{\Var{k}}$, and using that $c^*$ is a ring homomorphism, we have that
$$
	Z_G(X, A, Q)([\star]_\star) = c_!c^* 1_* = c_![\Rep{G}(X, A, Q)]_{\Rep{G}(X, A, Q)} = [\Rep{G}(X, A, Q)]_*.
$$

This is nothing but the virtual class of the associated parabolic representation variety in $\K{\Var{k}}$. Therefore, putting all together we have proven the following result.

\begin{thm}
For any algebraic group $G$ and any $n \geq 1$, there exists a conic lax monoidal TQFT
$$
	Z_G: \NBord{n}{\Lambda} \to \Mod{\K{\Var{k}}}
$$
computing the virtual classes of parabolic representation varieties over nodefolds.
\end{thm}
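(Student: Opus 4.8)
The plan is to exhibit $Z_G$ as the composite $\cQ \circ \cF_G$ of two lax monoidal $2$-functors and to verify that each factor carries the required structure. Since the quantization functor $\cQ \colon \Span{\Var{k}} \to \Mod{\K{\Var{k}}}_t$ has already been constructed and shown to be lax monoidal in \cite{GP-2018}, the substance of the argument reduces to checking that the field theory $\cF_G \colon \NBord{n}{\Lambda} \to \Span{\Var{k}}$ is a well-defined strong (indeed strict) monoidal $2$-functor. Because the composite of a strong monoidal functor with a lax monoidal one is again lax monoidal, the statement will follow once both pieces are in place, and the ``computing virtual classes'' clause will be read off from the span attached to a closed nodefold.

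First I would check that $\cF_G$ is well defined on all three layers of the $2$-category. On objects and $1$-morphisms the assignments $(M,A,Q) \mapsto \Rep{G}(M,A,Q)$ and the span attached to a bordism land in algebraic varieties and regular maps by Section \ref{sec:representation-varieties}. The only delicate point here is independence of the span from the chosen representative of a homotopy class of bordisms: this holds because $\Rep{G}(-)$ factors through the fundamental groupoid, so the defining homotopy equivalences preserving $Q$ induce isomorphisms of representation varieties compatible with the boundary inclusions. On $2$-morphisms, a conic degeneration $f$ produces $\Rep{G}(f)$, which intertwines with the inclusion-induced legs by functoriality of $\Rep{G}(-)$ and hence is a genuine $2$-cell of $\Span{\Var{k}}$; vertical composition of such cells is just composition of conic degenerations.

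The one step requiring real geometric input is functoriality under horizontal composition, and here I would invoke Corollary \ref{cor:gluing-parabolic}. Gluing two bordisms along a common boundary $M_2$ realizes the glued representation variety as the fibered product $\Rep{G}(X, A, Q) \times_{\Rep{G}(M_2, A \cap M_2, Q|_{M_2})} \Rep{G}(X', A', Q')$; on the other hand, composition of spans in $\Span{\Var{k}}$ is by definition the pullback along the shared middle leg. Matching these two fibered products is precisely the content of the corollary, so $\cF_G$ preserves horizontal composition. Compatibility with identities is immediate from the thin-bordism convention that declares $(M,A,Q)$ its own identity, and strict monoidality follows from the honest equalities $\Rep{G}((X,A,Q) \sqcup (X',A',Q')) = \Rep{G}(X,A,Q) \times \Rep{G}(X',A',Q')$ and $\Rep{G}(\emptyset) = \star$. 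Thus the genuine obstacle is entirely absorbed by Corollary \ref{cor:gluing-parabolic}, and everything else is a formal consequence of the functoriality of the fundamental groupoid.

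Finally I would extract the virtual-class statement from the explicit computation sketched above the theorem: a closed $n$-nodefold $X$, viewed as an endomorphism of $\emptyset$, has its span degenerate to $\star \leftarrow \Rep{G}(X,A,Q) \to \star$, whence $Z_G(X,A,Q) = c_! c^*$ for the structure map $c \colon \Rep{G}(X,A,Q) \to \star$. Applying this homomorphism to the unit $[\star]_\star$ and using that $c^*$ is a ring homomorphism yields $Z_G(X,A,Q)([\star]_\star) = [\Rep{G}(X,A,Q)]$ in $\K{\Var{k}}$, which is exactly the assertion that the functor computes virtual classes of parabolic representation varieties over nodefolds.
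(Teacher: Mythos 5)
Your proposal is correct and follows essentially the same route as the paper: factor $Z_G = \cQ \circ \cF_G$, reduce the only substantive verification (horizontal composition of $\cF_G$) to Corollary \ref{cor:gluing-parabolic}, obtain monoidality from $\Rep{G}((X,A,Q)\sqcup(X',A',Q')) = \Rep{G}(X,A,Q)\times\Rep{G}(X',A',Q')$, and read off the virtual class from the span $\star \leftarrow \Rep{G}(X,A,Q) \rightarrow \star$ via $c_!c^*$ applied to the unit. The only (welcome) addition is your explicit check that the span is independent of the representative of the homotopy class of a bordism, a point the paper leaves implicit.
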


\subsection{Effect of conic degenerations}\label{sec:conic-deg-tqft}

In order to get some flavour about the behavior of $\Zs{G}$ with respect to degenerations, suppose that $(W, A, Q), (W',A', Q'): (M_1,A_1, Q_1) \to (M_2, A_2, Q_2)$ are two nodefold bordisms and that $f: (W, A, Q) \to (W',A', Q')$ is a degeneration. Hence, under the field theory, we have a commutative diagram of spans
\[
\begin{displaystyle}
   \xymatrix
   {
   & \Rep{G}(W, A, Q) \ar[rd]^{i_2}\ar[ld]_{i_1} & \\
   	\Rep{G}(M_1,A_1, Q_1) & & \Rep{G}(M_2,A_2, Q_2) \\
   	& \Rep{G}(W', A', Q') \ar[uu]_{f} \ar[ru]_{i_2'} \ar[lu]^{i_1'} &
  	}
\end{displaystyle}
\]
where we have overloaded the notation $\Rep{G}(f) = f: \Rep{G}(W', A', Q') \to \Rep{G}(W, A, Q)$. Therefore, we have that $\Zs{G}(W,A, Q) = (i_2)_! (i_1)^*$ and $\Zs{G}(W',A', Q') = (i_2)_! f_!f^*(i_1)^*$. In this way, the endomorphism $f_!f^*: \K{\RVar{k}{\Rep{G}(W, A, Q)}} \to \K{\RVar{k}{\Rep{G}(W, A, Q')}}$ gives the corresponding immediate twist $\Zs{G}(W, A, Q) \Rightarrow \Zs{G}(W', A', Q')$.

A particular example of this phenomenon appears when $W$ is a honest compact smooth manifold with a degeneration to a nodefold $f:W \to W'$, a situation that is called a \emph{normalization} of the nodefold $W'$. 

\begin{lem}
Let $f: W \to W'$ be a normalization with $W$ of dimension $n$. Then, around any conic point $p' \in W'$, $W'$ is locally homeomorphic to $\Cone{\bigsqcup_rS^{n-1} }$. The number $r > 1$ is called the number of \emph{branches} of $W'$ at $p'$. 
\begin{proof}
Pick $p \in f^{-1}(p')$. Since $W$ is a manifold, there exists an open set $U \subseteq W$, with $U \cap f^{-1}(\conic{W'})=\left\{p\right\}$, homeomorphic to the $n$-dimensional open ball $B^n = \Cone{S^{n-1}}$. Hence, since $f$ is a degeneration, $f: U-\left\{p\right\} \to f(U)-\left\{p'\right\}$ is a homeomorphism so $f|_U: U \to f(U)$ is a continuous bijective map. Thus, maybe restricting $U$, we find that $f|_U$ is an homeomorphism.

Therefore, locally around $p'$, $W'$ is homeomorphic to the gluing of $r=|f^{-1}(p')|$ open balls along a common interior point, which is precisely $\Cone{S^{n-1} \sqcup \ldots \sqcup S^{n-1}}$.
\end{proof}
\end{lem}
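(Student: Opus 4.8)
The plan is to prove the local model statement by transporting the manifold structure of $W$ across the degeneration $f$. The key observation is that a normalization is, by definition, a conic degeneration from a smooth manifold, so near any conic point $p' \in W'$ we can analyze the situation fiber by fiber over the finite set $f^{-1}(p')$. First I would fix a conic point $p' \in \conic{W'}$ and pick an arbitrary preimage $p \in f^{-1}(p')$. Since $W$ is a genuine smooth (topological) manifold of dimension $n$, it is locally euclidean at $p$, so there is an open neighbourhood of $p$ homeomorphic to the $n$-ball $B^n = \Cone{S^{n-1}}$; shrinking this neighbourhood, I can assume it meets $f^{-1}(\conic{W'})$ only in $\left\{p\right\}$, since conic points are discrete.

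The heart of the argument is to upgrade the restriction $f|_U$ to a homeomorphism onto its image. By the definition of conic degeneration, $f$ restricts to a homeomorphism away from the finite exceptional set, so $f: U - \left\{p\right\} \to f(U) - \left\{p'\right\}$ is already a homeomorphism. Adding back the single point $p \mapsto p'$ makes $f|_U : U \to f(U)$ a continuous bijection. The step I expect to be the main obstacle is concluding that this continuous bijection is actually a homeomorphism: continuous bijections of topological spaces need not be open in general. I would handle this by invoking the local compactness coming from the manifold structure on $U$ (invariance of domain, or a direct compactness argument on closed balls contained in $U$), so that after possibly shrinking $U$ one obtains that $f|_U$ is an open map and hence a homeomorphism onto $f(U)$.

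Once each preimage neighbourhood is identified homeomorphically with an open ball, I would assemble the local picture of $W'$ at $p'$. Since $f^{-1}(p')$ is finite with $r = |f^{-1}(p')|$ points, and away from $p'$ the map $f$ is a homeomorphism, the space $f(U_1) \cup \cdots \cup f(U_r)$ around $p'$ is obtained by taking $r$ disjoint open balls $U_1, \ldots, U_r$ and gluing them along the single common point $p'$ to which all the $p_i$ are sent. This quotient is precisely $\Cone{S^{n-1} \sqcup \cdots \sqcup S^{n-1}}$ with $r$ copies, using the identification $B^n \cong \Cone{S^{n-1}}$ noted in the earlier example. Finally, I would observe that $r > 1$: if $r = 1$ then $f$ would be a homeomorphism near $p'$, forcing $p'$ to be a smooth point rather than a conic point, contradicting $p' \in \conic{W'}$. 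This shows the local model is a cone over $r$ spheres with $r > 1$, as claimed.
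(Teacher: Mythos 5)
Your proposal follows essentially the same route as the paper: take a ball neighbourhood $U$ of each preimage point meeting $f^{-1}(\conic{W'})$ only at $p$, use the degeneration property to see $f|_U$ is a continuous bijection, upgrade it to a homeomorphism after shrinking, and identify the resulting gluing of $r$ balls along a point with $\Cone{S^{n-1}\sqcup\cdots\sqcup S^{n-1}}$. The only caveat is that invariance of domain does not apply directly since $W'$ is not a manifold near $p'$, but your alternative compactness argument (a continuous bijection from a closed ball onto a Hausdorff image is a homeomorphism) is exactly what justifies the paper's ``maybe restricting $U$'' step, and your extra observation that $r>1$ is a harmless addition.
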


Now, let $f: W \to W'$ be a normalization with $W$ a closed connected $n$-dimensional manifold. For simplicity, we will suppose that $W'$ as a unique conic point $p' \in W'$ and let $f^{-1}(p')=\left\{p_1, \ldots, p_r\right\}$. By the previous proposition, there exist open neighbourhoods $U_i \subseteq W$ of $p_i$ homeomorphic to open balls and an open set $U' \subseteq W'$ of $p'$ such that $f$ gives an homeomorphism of $U'$ with the gluing of $U_1 \cup \ldots \cup U_r$ along their center.

In this way, setting $\tilde{W} = W - U_1 - \ldots - U_r$ and $\tilde{W'} = W' - U'$ we obtain decompositions of the bordisms $W, W': \emptyset \to \emptyset$ as
$$
	W = \left(\bigsqcup_r D^n\right) \circ \tilde{W}, \hspace{1cm} W' = \Cone{\bigsqcup_r S^{n-1}} \circ \tilde{W}'.
$$
Here $D^n$ denotes the closed $n$-dimensional ball and we are seeing $\bigsqcup_r D^n$ and $\Cone{\bigsqcup_r S^{n-1}}$ both as bordisms $ \bigsqcup_r S^{n-1} \to \emptyset$.

Now, let us choose $A' \subseteq W'$ with $p' \not\in A'$ such that it contains a single point on each of the $r$ copies of the boundaries $S^{n-1} \subseteq W'$ and does not meet the interior of the cone. Let us also denote $A = f^{-1}(A') \subseteq W$. Observe that we have $\Rep{G}\left(\bigsqcup_r D^n, A\right) = \prod_r\Rep{G}\left(\star\right) = 1$ and $\Rep{G}\left(\Cone{\bigsqcup_r S^{n-1}}, A'\right) = \Rep{G}\left(\Cone{\bigsqcup_r S^{n-1}}\right) \times G^{r-1} = \Rep{G}\left(\star\right) \times G^{r-1} = G^{r-1}$. Therefore, under the field theory $\cF_G$, we obtain the commutative diagram of spans
\[
\begin{displaystyle}
   \xymatrix
   {
   & 1 \ar[rrd] \ar[ld]_i && \\
   	\prod_r \Rep{G}\left(S^{n-1}, \star\right) & && 1 = \Rep{G}(\emptyset)\\
   	& G^{r-1} \ar[uu]_p \ar[rru]_{p} \ar[lu] &&
  	}
\end{displaystyle}
\]
Here, the uppermost span is the corresponding one to $(\bigsqcup_r D^n, A): \bigsqcup_r (S^{n-1}, \star) \to \emptyset$ and the left arrow is the inclusion of $1 = (1, \ldots, 1)$ into $\prod_r \Rep{G}\left(S^{n-1}, \star\right)$. The lowermost span is the corresponding to $(\Cone{\sqcup_r S^{n-1}}, A'): \bigsqcup_r (S^{n-1}, \star) \to \emptyset$ and the right and vertical arrows are the projection $p: G^{r-1} \to 1$.

\begin{rmk}
If $n > 2$, then $S^{n-1}$ is simply connected and, thus, the map $i$ is the identity. On the other hand, if $n = 2$, then $\Rep{G}\left(S^{n-1}, \star\right) = G$ so $\prod_r \Rep{G}\left(S^{n-1}, \star\right) = G^{r}$. However, even in that case, the left lowermost map $G^{r-1} \to \prod_r \Rep{G}\left(S^{n-1}, \star\right) = G^r$ is not an inclusion, but the projection onto $1 \in G^r$.
\end{rmk}

Therefore, we obtain that
$$
	\Zs{G}\left(\sqcup_r D^{n}, A\right) = i^*, \hspace{0.5cm} \Zs{G}\left(\Cone{\sqcup_r S^{n-1}}, A\right) = p_!p^*i^* = [G]^{r-1} \times i^*.
$$
Hence, we get that
\begin{align*}
	[\Rep{G}(W')] &\times [G]^{|A|-1} = \Zs{G}(W', A)(1) = \Zs{G}(\Cone{\sqcup_r S^{n-1}}, A) \circ \Zs{G}(\tilde{W}, A)(1) \\
	&=  i^*\Zs{G}(\tilde{W}, A)(1) \times [G]^{r-1} = \Zs{G}(W, A)(1) \times [G]^{r-1} = [\Rep{G}(W)] \times [G]^{r+ |A|-2}.
\end{align*}
Therefore, if we work the localization of $\K{\Var{k}}$ by $[G]$, we get that $[\Rep{G}(W')] = [\Rep{G}(W)] \times [G]^{r-1}$.


Procceding analogously in the case of $l>1$ conic points in $W'$ with $r_1, \ldots, r_l$ branches, we finally obtain that
\begin{align}\label{eq:rep-conic}
	[\Rep{G}(W')] = [\Rep{G}(W)] \times [G]^{r_1 + \ldots r_l-1}.
\end{align}
Analogous results can be obtained if $W$ and $W'$ are not closed or in the case of parabolic structures.


A very important example of nodefold comes from plane curves, as the following result shows.

\begin{prop}\label{prop:complex-plane-conic}
Let $X$ be a complex projective irreducible plane curve, maybe with singularities. Then $X$ is a nodefold whose conic points $p \in X$ are those singular points with $r > 1$ branches. In that case, $p$ has an open neighbourhood homeomorphic to $\Cone{S^1 \sqcup \stackrel{(r)}{\ldots} \sqcup S^1}$.
\begin{proof}
If $p \in X$ is a regular point, then $X$ around $p$ is a topological manifold. Hence, we can suppose that $p \in X$ is a singular point with $r \geq 1$ branches. Let $B_\epsilon(p) \subseteq \CC^2$ be a small open ball of radius $\epsilon > 0$ with center at $p$ and let $S_\epsilon(p)$ be its boundary sphere. By \cite[Corollary 2.9]{milnor2016singular}, for $\epsilon$ small enough, $S_\epsilon(p) \cap X$ is a (smooth) link in $S_\epsilon(p) \cong S^3$ so it is homeomorphic to $B = S^1 \sqcup \stackrel{(r)}{\ldots} \sqcup S^1$, where $r$ is the number of branches of  $X$ around $p$. Moreover, by \cite[Theorem 2.10]{milnor2016singular}, $B_\epsilon(p) \cap X$ is homeomorphic to $\Cone{B}$, proving that $X$ is locally conic at $p$ of the claimed form. Finally, observe that if $r=1$, $\Cone{S^1}$ is homeomorphic to an open ball, so $p$ is smooth.
\end{proof}
\end{prop}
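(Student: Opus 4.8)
The plan is to treat regular and singular points separately and, at singular points, to invoke Milnor's analysis of the local topology of complex hypersurface singularities. First I would dispose of the regular points: if $p \in X$ is a smooth point of the curve, then near $p$ the curve $X$ is a complex one-dimensional submanifold of $\CC^2$, hence a real two-dimensional topological manifold. Any point of a topological surface has a neighbourhood homeomorphic to an open disk $B^2$, and since $B^2 \cong \Cone{S^1}$ (the cone over a single circle), such a $p$ is a smooth point of the nodefold in the sense of the definition. This already shows that the only candidates for conic points are the singular points of $X$.

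For a singular point $p \in X$, the plan is to exploit the conic structure of the Milnor fibration. I would choose a small closed ball $B_\epsilon(p) \subseteq \CC^2$ centred at $p$, with boundary sphere $S_\epsilon(p) \cong S^3$. By the standard transversality statement \cite[Corollary 2.9]{milnor2016singular}, for all sufficiently small $\epsilon > 0$ the intersection $L = S_\epsilon(p) \cap X$ is a smooth compact one-dimensional submanifold of $S_\epsilon(p)$, that is, a (smooth) link in $S^3$; in particular $L$ is a disjoint union of finitely many circles. The second ingredient is Milnor's local conic structure theorem \cite[Theorem 2.10]{milnor2016singular}, which asserts that the pair $(B_\epsilon(p), B_\epsilon(p) \cap X)$ is homeomorphic to the cone on the pair $(S_\epsilon(p), L)$. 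Restricting to the curve yields a homeomorphism of the neighbourhood $B_\epsilon(p) \cap X$ of $p$ with $\Cone{L}$ carrying $p$ to the vertex $v_L$, so $X$ is locally conic at every point and is indeed a nodefold.

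It then remains to identify the number of circles of $L$ and to decide which of these cones are genuinely singular. I would argue that the number of connected components of $L$ equals the number $r$ of local analytic branches of $X$ at $p$: each branch through $p$ meets the small sphere $S_\epsilon(p)$ in a single circle, and distinct branches give disjoint circles once $\epsilon$ is small enough, so $L \cong S^1 \sqcup \stackrel{(r)}{\ldots} \sqcup S^1$. Finally, when $r = 1$ the cone $\Cone{S^1}$ is homeomorphic to an open disk, so $p$ is in fact a smooth point of the nodefold; hence the conic points of $X$ are exactly the singular points with $r > 1$ branches, as claimed.

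The main obstacle I expect is the branch-counting step. Identifying the number of components of $L$ with the number of analytic branches, and making precise that each branch contributes exactly one embedded circle, rests on the Puiseux parametrization of a plane curve germ together with the irreducibility of each branch; this is classical but genuinely geometric input rather than a formal manipulation. Everything else reduces to a clean application of the two cited results of Milnor, which supply both the smooth-link statement and the cone theorem in precisely the form needed here.
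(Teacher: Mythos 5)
Your proposal is correct and follows essentially the same route as the paper's proof: regular points are handled as topological manifold points, and singular points are treated via Milnor's transversality result (Corollary 2.9) and the local cone theorem (Theorem 2.10), with the final observation that $r=1$ yields a disk and hence a smooth point. The only difference is that you flag the branch-counting step explicitly and sketch its justification via Puiseux parametrization, whereas the paper simply asserts that the link has $r$ components.
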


\begin{rmk}
Indeed, Theorem 2.10 of \cite{milnor2016singular} holds for any irreducible hypersurface. Hence, in general, we actually have that any irreducible hypersurface is a nodefold. However, for real dimension of the hypersurface $n \geq 4$, there is a variety of compact manifolds of dimension $n - 1$ so, for higher dimensions, it is not possible to identify the local structure of a conic point so easily.
\end{rmk}

\begin{ex}
If $p \in X$ is a nodal point with $r>1$ branches, then it has an open neighbourhood homemorphic to the cone over $S^1 \sqcup \stackrel{(r)}{\ldots} \sqcup S^1$. A cusp point is a smooth point (in the nodefold sense) since it has $r=1$ branches.
\end{ex}

The previous result fully classifies complex projective plane curves topologically. First observe that, for each $r > 0$, there exists a smooth compact surface $W_r$ with boundary $\partial W_r = \bigsqcup_r S^1$ that gives rise to a degeneration $\varpi_r: W_r \to \Cone{\bigsqcup_r S^1}$ ramified at the vertex of the cone. $W_r$ is obtained by ``blowing-up'' the cone $\Cone{\bigsqcup_rS^1}$ at it vertex, as shown in Figure \ref{img:degeneration}. 

\begin{figure}[h]
\includegraphics[width=10cm]{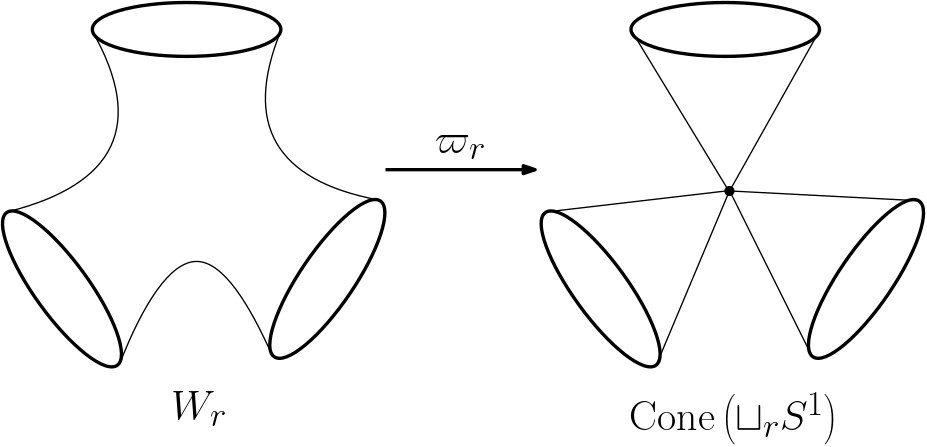}
\caption{Normalization of a conic point.}
\label{img:degeneration}
\end{figure}

Now, let $X$ be a complex projective plane curve that, according to Proposition \ref{prop:complex-plane-conic}, topologically it is a node-surface with $p_1, \ldots, p_l$ conic points with $r_1, \ldots, r_l$ branches each. By removing small neighborhoods of $X$ around the conic points $p_i$ and replacing them by $W_{r_i}$, we obtain a smooth closed surface $\Sigma$ and a normalization $\varpi: \Sigma \to X$. Hence, using equation (\ref{eq:rep-conic}) we get that
\begin{equation}\label{eq:rep-conic-surface}
	[\Rep{G}(X)] = [\Rep{G}(\Sigma)] \times [G]^{r_1 + \ldots r_l-1}.
\end{equation}
This shows that $\Sigma$ is uniquely determined by $X$ up to homeomorphism since, for $G = \GL{1}(k)$, $[\Rep{\GL{1}(k)}(\Sigma)]= [\GL{1}(k)]^{2g} =(q-1)^{2g}$, where $g$ is the genus of $\Sigma$. In this way, $X$ is characterized by the genus $g$ of its normalization and the tuple of branches $(r_1, \ldots, r_l)$.

\begin{rmk}
Obviously, a simpler proof of the fact that the normalization is determined by $X$ can be obtained by applying Hurwitz formula to the branched covering $\varpi$. However, we present this proof here since it is in the line of the ``Torelli-like'' theorems, aiming to characterize the underlying manifold by means of the moduli spaces on it.
\end{rmk}

Moreover, the previous computation shows that we can focus on a very particular node-surface. Fix $g,b \geq 1$ and let $\Sigma_{g,b}$ be the node-surface whose normalization is $\Sigma_g$, the closed smooth surface of genus $g$, and with a unique conic point with $b$ branches. By (\ref{eq:rep-conic-surface}), we have that 
$
	[\Rep{G}(X)] = [\Rep{G}(\Sigma_{g,r_1 + r_2 + \ldots + r_l})]
$. 
For this reason, from now on we will focus on the node-surfaces $\Sigma_{g,b}$.

\section{Character varieties over nodefolds}\label{sec:charvar-nodefolds}

Once we have computed the virtual class of representation varieties over node-surfaces, in this section, we shall compute the virtual class of the associated character varieties. Recall that, given a complex algebraic group $G$ and a nodefold $X$, the character variety is the GIT quotient
$$
	\Char{G}(X) = \Rep{G}(X) \sslash G.
$$
Here, the action of $G$ on $\Rep{G}(X)$ is given by conjugation i.e.\ $(g \cdot \rho)(\gamma) = g \rho(\gamma)g^{-1}$ for $g \in G$, $\rho \in \Rep{G}(X)$ and $\gamma \in \pi_1(X)$.

In order to understand this quotient, we will use the theory of pseudo-quotients as developed in \cite{GP-2018b}, specially Sections 3, 4 and 5. Let us denote by $\Repred{G}(X)$ and $\Repirred{G}(X)$ the subvarieties of $\Rep{G}(X)$ of reducible and irreducible representations, respectively, and by $\Charred{G}(X) = \Repred{G}(X) \sslash G$ and $\Charirred{G}(X) = \Repirred{G}(X) \sslash G$ the corresponding character varieties. 
If $G$ is a linear algebraic group (so in particular it is affine), then, by \cite[Proposition 6.4]{GP-2018b} we have that if $G^0$ denotes the center of $G$, then $\Repirred{G}(X) \to \Repirred{G}(X) \sslash (G/G^0) = \Charirred{G}(X)$ is a free geometric quotient. Applying now \cite[Theorem 5.4]{GP-2018b} we get that $\left[\Charirred{G}(X)\right]\left[G/G^0\right] = \left[\Repirred{G}(X)\right]$.

Notice that 

Now, let us suppose that $G = \GL{n}(k)$. Consider a partition $\tau$ of $n$, that is a multiset the form $\tau = [r_1^{a_1}r_2^{a_2}\ldots r_s^{a_s}]$ with $r_i$ and $a_i$ positive integers, the $r_i$ distinct, such that $\sum_i a_ir_i = n$. We will say that a representation $\rho: \pi_1(X) \to \GL{}(k^n)$ is of type $\tau$ if the $\pi_1(X)$-module $k^n$ can be decomposed as a direct sum
$$
	k^n = \bigoplus_{i = 1}^s V_i^{a_i},
$$
where the $V_i$ are irreducible representations of dimension $\dim V_i = r_i$. An adaptation of Proposition 7.3 and Corollary 7.4 of \cite{GP-2018b} (see also \cite[Section 3]{2006.01810}) shows that any representation of $\Rep{\SL{n}(k)}(X)$ is equivalent, in the GIT quotient, to a representation of type $\tau$, and such $\tau$ is unique.

This can be used to decompose the character variety into simpler pieces. Let us denote by $\Char{}^\tau(X)$ the character variety of the representations of type $\tau$. Each representation of $\Char{}^\tau(X)$ is determined by an element of $\prod_{i} (\Charirred{\GL{r_i}(k)}(X))^{a_i}$ up to permutation of representations of the same dimension. In this way, if $S_\tau$ is the subgroup of the symmetric group $S_n$ that preserves $\tau$, we get that $(\prod_{i} (\Charirred{\GL{r_i}(k)}(X))^{a_i}, S_\tau)$ is a core (see \cite[Proposition 4.4]{GP-2018b}) for the action of $\GL{n}(k)$ on the representations of type $\tau$. Therefore, we have that
$$
	\Char{}^\tau(X) = \prod_{i=1}^s \Sym^{a_i}(\Charirred{\GL{r_i}(k)}(X)).
$$

\begin{rmk}
In particular, for $\tau = [n^1]$ we obtain the stable locus of irreducible representations, $\Char{}^\tau(X) = \Charirred{\GL{n}(k)}(X)$, and for $\tau = [1^n]$ we get the diagonal representations.
\end{rmk}
\begin{rmk}
For those familiar with Lie group theory they may see this decomposition as a result of considering the Levi subgroups of $\GL{n}(k)$ for the given partition $\tau$. \end{rmk}



Moreover, the representations of type $\tau$ form an open orbitwise-closed set of $\Char{\GL{n}(k)}(X)$ so by Theorem 4.1 of \cite{GP-2018b} we get that
$$
	\left[\Char{\GL{n}(k)}(X)\right] = \sum_\tau [\Char{}^\tau(X)] = \sum_\tau \prod_{i=1}^s \left[\Sym^{a_i}(\Charirred{\GL{r_i}(k)}(X))\right],
$$
where the sum runs over the set of all the partitions of $n$.





In the particular case of rank $n = 2$, we have that the only partitions of $2$ are $[2^1]$, corresponding to irreducible representations, and $[1^2]$, corresponding to diagonal representations. Therefore
\begin{align*}
	\left[\Char{\GL{2}(k)}(X)\right] &=  [\Char{}^{[1^2]}(X)] + [\Char{}^{[2^1]}(X)] = [\Sym^2(\Charirred{\GL{1}(k)}(X))] + [\Charirred{\GL{2}(k)}(X)] \\
	&= [(k^*)^{2(2g+b-1)}/\ZZ_2] + \frac{[\Rep{\GL{2}(k)}(X)] - [\Repred{\GL{2}(k)}(X)]}{[\PGL{2}(k)]},
\end{align*}
where we have used that $\Repirred{\GL{1}(k)}(X) = \Rep{\GL{1}(k)}(X) = (k^*)^{2g+b-1}$ if $X$ is a closed connected node-surface with normalization of genus $g$ and $b > 1$ branches.

In the case of $G = \SL{2}(k)$, the argument works verbatim but with the particularity that, now, we have to restrict to representations of unit determinant. Thus, in this case we have 
\begin{align}\label{eqn:quot}
	\left[\Char{\SL{2}(k)}(X)\right] = [(k^*)^{2g+b-1}/\ZZ_2] + \frac{[\Rep{\SL{2}(k)}(X)] - [\Repred{\SL{2}(k)}(X)]}{[\PGL{2}(k)]}.
\end{align}
At this point, the strategy to compute this expression is as follows. The virtual class of the total representation variety, $[\Rep{\SL{2}(k)}(X)]$ is the hardest part to be computed and is provided by the TQFT. On the other hand, the reducible part $[\Repred{\SL{2}(k)}(X)]$ can be computed by hand in terms of lower dimensional representations (in this case, $1$-dimensional representations which are very easy). 

The aim of the following sections is to follow this strategy. From now on, we will focus on the case $G = \SL{2}(k)$. To shorten notation, we will omit the subscript in the representation and character varieties and denote $\Rep{}(X)= \Rep{\SL{2}(k)}(X)$, $\Char{}(X)= \Char{\SL{2}(k)}(X)$ and analogous for subsequent strata and parabolic versions.


\subsection{Character varieties over orientable node-surfaces}\label{sec:charvar-nodefolds-nopar}

Let us fix $G=\SL{2}(k)$ and consider the closed connected node-surface with normalization of genus $g$ and $b > 1$ branches, $\Sigma_{g,b}$. Recall that, from equation (\ref{eq:rep-conic-surface}), we have that $[\Rep{}(\Sigma_{g,b})] = [\Rep{}(\Sigma_{g})] [\SL{2}(k)]^{b-1}$, where $\Sigma_g$ is the usual compact connected surface of genus $g$ (the normalization of $\Sigma_{g,b}$). Indeed, using the standard presentation of the fundamental group of $\Rep{}(\Sigma_{g})$ we have that
$$
	\Rep{}(\Sigma_{g,b}) = \left\{(A_1, B_1, \ldots, A_g, B_g, C_1, \ldots, C_{b-1}) \in \SL{2}(k)^{2g+b-1} \,\left|\, \prod_{i=1}^g [A_i, B_i] = I\right.\right\}.
$$
Here $[A_i, B_i] = A_iB_iA_i^{-1}B_i^{-1}$ denotes the group commutator. Let $\mathcal{A} \in \SL{2}(k)^{2g+b-1}$ be a tuple of upper-triangular matrices, say
$$
	\mathcal{A}= \left(
	\begin{pmatrix}
		\lambda_1 & \alpha_1 \\
		0 & \lambda_1^{-1}
	\end{pmatrix},
	\begin{pmatrix}
		\mu_1 & \beta_1 \\
		0 & \mu_1^{-1}
	\end{pmatrix}, \ldots,
	\begin{pmatrix}
		\lambda_{g} & \alpha_{g} \\
		0 & \lambda_{g}^{-1}
	\end{pmatrix},
	\begin{pmatrix}
		\mu_{g} & \beta_{g} \\
		0 & \mu_{g}^{-1}
	\end{pmatrix}, 
	\begin{pmatrix}
		\eta_{1} & \gamma_{1} \\
		0 & \eta_{1}^{1}
	\end{pmatrix}, \ldots,
	\begin{pmatrix}
		\eta_{b-1} & \gamma_{b-1} \\
		0 & \eta_{b-1}^{-1}
	\end{pmatrix}
	\right)
$$
with $\lambda_i, \mu_i, \eta_i \in k^* = k - \left\{0\right\}$ and $\alpha_i, \beta_i, \gamma_i \in k$. A straightforward computation shows that $\mathcal{A} \in \Rep{}(\Sigma_{g,b})$ if and only if
\begin{equation*}\label{eq:cond-upper}
	\sum_{i=1}^g \lambda_i\mu_i \left[\left(\mu_i-\mu_i^{-1}\right)\beta_i - \left(\lambda_i-\lambda_i^{-1}\right)\alpha_i\right]= 0.
\end{equation*}
We will denote by $\pi \subseteq k^{2g+b-1}$ the $(\alpha_i, \beta_i)$-plane defined by the previous equation for fixed $(\lambda_i, \mu_i)$.

Now, let us stratify $\Repred{}(\Sigma_{g,b})$ as follows.
\begin{itemize}
	\item $\XI{\Rep{}(\Sigma_{g,b})}$ is the set of tuples with all the matrices equal to $\pm \Id$ or, equivalently, the set of completely reducible representations with with the images of the generators of trace $\pm 2$. Hence, $\XI{\Rep{}(\Sigma_{g,b})}$ is a set of $2^{2g+b-1}$ matrices.
	\item $\XPh{\Rep{}(\Sigma_{g,b})}$ is the set of tuples with matrices with trace $\pm 2$ that are not completely reducible. Given $A \in \XPh{\Rep{}(\Sigma_{g,b})}$, let
$$
    \left(\begin{pmatrix} \epsilon_1 & a_1 \\ 0 & \epsilon_1\end{pmatrix}, \begin{pmatrix} \epsilon_2 & a_2 \\ 0 & \epsilon_2\end{pmatrix}, \ldots, \begin{pmatrix}\epsilon_{2g+b-1} & a_{2g+b-1} \\ 0 & \epsilon_{2g+b-1}\end{pmatrix}\right)
$$ be the element of conjugate to $A$ with $\epsilon_i = \pm 1$ and $a_i \in k$ not all zero. Observe that such an element is unique up to simultaneous rescalling of the off-diagonal entries $a_i$. Thus, the $\SL{2}(k)$-orbit of $A$, $[A]$, is the set of reducible representations $(B_1, \ldots, B_{2g+b-1}) \in \Xf{n}$ with a double eigenvalue such that, in their upper triangular form,
$$
    \left(\begin{pmatrix} \epsilon_1 & b_1 \\ 0 & \epsilon_1\end{pmatrix},\begin{pmatrix} \epsilon_2 & b_2 \\ 0 & \epsilon_2\end{pmatrix}, \ldots, \begin{pmatrix} \epsilon_{2g+b-1} & b_{2g+b-1} \\ 0 & \epsilon_{2g+b-1}\end{pmatrix}\right)
$$
there exists $\lambda \neq 0$ such that $(a_1, \ldots, a_{2g+b-1}) = \lambda(b_1, \ldots, b_{2g+b-1})$. Then, taking $\lambda \to 0$, we find that the closure of the orbit, $\overline{[A]}$, is precisely the set of reducible representations with double eigenvalue such that their off-diagonal entries satisfy $(a_1, \ldots, a_{2g+b-1}) = \lambda(b_1, \ldots, b_{2g+b-1})$ for some $\lambda \in k$. In particular, for $\lambda = 0$ we get that the special element $(\epsilon_1\Id, \ldots, \epsilon_{2g+b-1}\Id) \in \overline{[A]}$.

	Therefore, the tuple $(a_1, \ldots, a_{2g+b-1}) \in k^{2g+b-1}-\left\{0\right\}$ determines the diagonal form up to projectivization. The stabilizer of a Jordan type matrix under the action of $\SL{2}(k)$ by conjugation is $k$, so the orbit of an element is $\SL{2}(k)/k$. Hence, we obtain a regular fibration
	$$
		k^* \longrightarrow \SL{2}(k)/k \times \left\{\pm 1\right\}^{2g+b-1} \times \left(k^{2g+b-1}-\left\{0\right\}\right) \longrightarrow \XPh{\Rep{}(\Sigma_{g,b})}.
	$$
Observe that this fibration is locally trivial in the Zariski topology so this fibration has trivial monodromy and, thus
\begin{align*}
    \left[\XPh{\Rep{}(\Sigma_{g,b})}\right] &= \left[\left\{\pm 1\right\}^{2g+b-1}\right] \left[\PP^{2g+b-2}\right] \left[\SL{2}(k)/\Stab\,J_+\right] \\&= 2^{{2g+b-1}}(q^2-1)\frac{q^{2g+b-1} -1}{q-1}.
\end{align*}
Recall that, as introduced in Section \ref{sec:grothendieck-ring}, we denote the Lefschetz motive as $q = [k]$.
	\item $\XDh{\Rep{}(\Sigma_{g,b})}$ is the set of completely reducible representations with images not all equal to $\pm \Id$. Given $A$ in this stratum, let
$$
    \left(\begin{pmatrix} \lambda_1 & 0 \\ 0 & \lambda_1^{-1}\end{pmatrix}, \ldots, \begin{pmatrix}\mu_{g} & 0 \\ 0 & \mu_{g}^{-1}\end{pmatrix}, \begin{pmatrix}\eta_{1} & 0 \\ 0 & \eta_{1}^{-1}\end{pmatrix}, \ldots, \begin{pmatrix}\eta_{b-1} & 0 \\ 0 & \eta_{b-1}^{-1}\end{pmatrix}\right)
$$ be the element conjugate to $A$ with $(\lambda_i, \mu_i, \eta_i) \in (k^*)^{2g+b-1}$ and not all equal to $\pm 1$. The stabilizer of a diagonal matrix is $k^*$ so the orbit of this representation is $\SL{2}(k)/k^*$. This canonical diagonal form of an element of $\XDh{\Rep{}(\Sigma_{g,b})}$ is unique up to simultaneous permutation of the eigenvalues, so we have a double covering
	$$
		\SL{2}(k)/k^* \times \left((k^*)^{2g+b-1}-\left\{(\pm 1, \ldots, \pm 1)\right\}\right) \longrightarrow \XDh{\Rep{}(\Sigma_{g,b})}.
	$$
Therefore, we obtain that
$$
	\XDh{\Rep{}(\Sigma_{g,b})} = \frac{\SL{2}(k)/k^* \times \left[(k^*)^{2g+b-1}-\left\{(\pm 1, \ldots, \pm 1)\right\}\right]}{\ZZ_2}.
$$
Using \cite[Remark 5.3]{GP-2018b}, it virtual class is
$$
	\left[\XDh{\Rep{}(\Sigma_{g,b})}\right] = \frac{q^3-q}{2} \left((q-1)^{{2g+b-2}} + (q+1)^{2g+b-2}\right) - 2^{2g+b-1}q^2.
$$
	\item $\XTilde{\Rep{}(\Sigma_{g,b})}$ is the set of reducible representations, not completely reducible, so that not all its matrices have double eigenvalue. In this case, any element is conjugated to one of the form
$$
	\left(
	\begin{pmatrix}
		\lambda_1 & \alpha_1 \\
		0 & \lambda_1^{-1}
	\end{pmatrix},
	\begin{pmatrix}
		\mu_1^{-1} & \beta_1 \\
		0 & \mu_1
	\end{pmatrix}, \ldots,
	\begin{pmatrix}
		\lambda_{g} & \alpha_{g} \\
		0 & \lambda_{g}
	\end{pmatrix},
	\begin{pmatrix}
		\mu_{g} & \beta_{g} \\
		0 & \mu_{g}
	\end{pmatrix}, 	\begin{pmatrix}
		\eta_{1} & \gamma_{1} \\
		0 & \eta_{1}^{-1}
	\end{pmatrix}, \ldots, 	\begin{pmatrix}
		\eta_{g} & \gamma_{b-1} \\
		0 & \eta_{b-1}^{-1}
	\end{pmatrix}
	\right)
$$
with $(\lambda_i, \mu_i,\eta_{i}) \in (k^*)^{2g + b-1} - \left\{(\pm 1, \ldots, \pm 1)\right\}$ and $(\alpha_i, \beta_i, \gamma_i) \in \pi \times k^{b-1} - l$ where $l$ is the line spanned by $(\lambda_1 - \lambda_1^{-1}, \mu_1 - \mu_1^{-1}, \ldots, \lambda_g - \lambda_g^{-1}, \mu_g - \mu_g^{-1}, \ldots, \eta_{b-1}-\eta_{b-1}^{-1})$. The action of $\PGL{2} = \SL{2}/\{\pm \Id\}$ by conjugation on these elements is free, and the canonical form is determined up to action of an upper-triangular matrix of $k \times k^*$. Thus, we have a fibration
$$
	 k \times k^* \longrightarrow \PGL{2} \times \Omega \longrightarrow \XTilde{\Rep{}(\Sigma_{g,b})},
$$
where $\Omega$ is a locally trivial fibration $\left(\pi \times k^{b-1} - l\right) \to \Omega \to (k^*)^{2g+b-1} - \left\{(\pm 1, \ldots, \pm 1)\right\}$. Using that $\left[\pi\right] = q^{2g-1}$ and $[l] = q$, we get that the virtual class of $\Omega$ is $[\Omega] = \left((q-1)^{2g+b-1}-2^{2g+b-1}\right)\left(q^{2g+b-2}-q\right)$ and the virtual class of this stratum is
$$
	\left[\XTilde{\Rep{}(\Sigma_{g,b})}\right] = \frac{q^3-q}{(q-1)q} \left((q-1)^{2g + b-1} - 2^{2g + b-1}\right)\left(q^{2g+b-2}-q\right).
$$
\end{itemize}

Therefore, putting all together we have that
\begin{align*}
	\left[\Repred{}(\Sigma_{g,b})\right] &= (q+1) (q-1)^{2g + b-1}\left(q^{2g+b-2}-q\right) \\
	&+ \frac{q^3-q}{2} \left((q-1)^{2g+b-2} + (q+1)^{2g+b-2}\right) - 2^{2g+b-1}(q^2-1).
\end{align*}

Using the results of Section 5.4 of \cite{GP-2018} (see also \cite[Proposition 11]{MM} for the weaker case of $E$-polynomials) and equation (\ref{eq:rep-conic-surface}), we know that virtual class of the total representation variety is
\begin{align*}
	\left[\Rep{}(\Sigma_{g,b})\right] =&\, \left[\Rep{}(\Sigma_{g})\right][\SL{2}(k)]^{b-1} = \left(2^{2g - 1} {\left(q - 1\right)}^{2g - 1} {\left(q + 1\right)}
q^{2g - 1}\right. \\
&+ 2^{2g - 1} {\left(q + 1\right)}^{2g - 1}
{\left(q - 1\right)} q^{2g - 1} + {{\left(q + q^{2 \,
g-1}\right)} {\left(q^2 - 1\right)}^{2 \,
g - 1}}\\
&\left.+ \frac{1}{2} \, {\left(q +
1\right)}^{2g - 1} {\left(q - 1\right)}^{2} q^{2g - 1} +
\frac{1}{2} \, {\left(q - 1\right)}^{2g - 1} {\left(q + 1\right)}
{\left(q - 3\right)} q^{2g - 1}\right)(q^3-q)^{b-1}.
\end{align*}
Moreover, for the quotient of the reducible locus \cite[Corollary 7.5]{GP-2018b} shows that
$$
[(k^*)^{2g+b-1} \sslash \ZZ_2] = \frac{1}{2} \left((q-1)^{2g+b-1} + (q+1)^{2g+b-1}\right).
$$
So, using formula (\ref{eqn:quot}), we finally find that


\begin{align}\label{eq:nopar}
	\left[\Char{}(\Sigma_{g,b})\right] &= \frac{1}{2 q^3 \left(q^2-1\right)^3}\left[q^8 \left((q-1)^b+1\right) (q-1)^{2 g}+q^7 \left((q-1)^b+1\right) (q-1)^{2 g}\right.\notag\\
	&-2 q^6 \left((q-1)^b+1\right) (q-1)^{2 g}-2 q^5
   \left((q-1)^b+1\right) (q-1)^{2 g}\notag\\
   &+q^4 \left((q-1)^b+1\right) (q-1)^{2 g}+q^3 \left((q-1)^b+1\right) (q-1)^{2 g}\notag\\
   &+(q-1)^3 q^3
   \left((q+1)^b-1\right) (q+1)^{2 g+2}+\left(q \left(q^2-1\right)\right)^b \left((q+1)^2 (q-1)^{2 g} \left(4^g+q-3\right) q^{2g} \right.\\
   &\left.+(q-1)^2 (q+1)^{2 g} \left(4^g+q-1\right) q^{2 g}+2 \left(q^2-1\right)^{2 g} \left(q^{2 g}+q^2\right)\right)\notag\\
   &\left.-\left(q^2-1\right)q^{2 g+1} \left(2 \left(4^g \left(q^4+q^2+1\right)+(q+1)^2 (q-1)^{2 g+1}\right)-q^2 3\cdot 2^{2 g+1}\right)\right]\notag
		\end{align}

\begin{rmk}
Observe that the calculation above is only valid for $g \geq 1$. This is due to the fact that, for the stratum $\XTilde{\Rep{}(\Sigma_{g,b})}$, the set of feasible eigenvalues and antidiagonal elements are required to lie in a hyperplane. However, this equation vanishes for $g=0$ but, in that case, we only need to consider the quotient of a free group, as studied in Section 7.1 of \cite{GP-2018b} or \cite{florentino2019polynomials}.
\end{rmk}

\subsection{The parabolic case with punctures of Jordan type}\label{sec:parabolic-jordan}
Now, let us address the computation of the virtual class of the character variety in the parabolic case. In this section, we will focus on the case in which the parabolic structure $Q$ only contains punctures of Jordan type
$$
	J_+ = \begin{pmatrix}
	1 & 1\\
	0 & 1\\
\end{pmatrix},
$$
that is $Q = \left\{(p_1, [J_+]), \ldots, (p_s, [J_+])\right\}$ with $p_1, \ldots, p_s \in \Sigma_{g,b}$ different points. Observe that the stabilizer of $J_+$ by conjugation is $\Stab J_+ = k$. In particular, $[\SL{2}(k)/\Stab J_+] = q^2-1$.

As before, consider a general representation of the form
$$
	A = \left(
	\begin{pmatrix}
		\lambda_1 & \alpha_1 \\
		0 & \lambda_1^{-1}
	\end{pmatrix},
	\begin{pmatrix}
		\mu_1 & \beta_1 \\
		0 & \mu_1^{-1}
	\end{pmatrix}, \ldots,
\begin{pmatrix}
		\eta_{1} & \gamma_{1} \\
		0 & \eta_{1}^{-1}
	\end{pmatrix}, \ldots, 	\begin{pmatrix}
		\eta_{b-1} & \gamma_{b-1} \\
		0 & \eta_{b-1}^{-1}
	\end{pmatrix},
	\begin{pmatrix}
		1 & c_1 \\
		0 & 1
	\end{pmatrix}, \ldots,
	\begin{pmatrix}
		1 & c_s \\
		0 & 1
	\end{pmatrix}
	\right),
$$
with $\lambda_i, \mu_i, \eta_i \in k^*$, $\alpha_i, \beta_i, \gamma_i \in k$ and $c_i \in k^*$. Then, $A \in \Rep{}(\Sigma_{g,b}, Q)$ if and only if
\begin{equation}
	\sum_{i=1}^g \lambda_i\mu_i \left[\left(\mu_i-\mu_i^{-1}\right)\beta_i - \left(\lambda_i-\lambda_i^{-1}\right)\alpha_i\right] + \sum_{i=1}^s c_i= 0.
\end{equation}
For $(\lambda_i, \mu_i)$ fixed, let us denote by $\pi_s \subseteq k^{2g+s}$ the $(\alpha_i, \beta_i, c_i)$-plane defined by the previous equation. In order to compute the virtual class of $\pi_s$ observe that, solving for $c_s$, we observe that $\pi_s = k^{2g} \times (k^*)^{s-1} - \pi_{s-1}$. Using as base case that $\pi_1$ is $k^{2g}$ minus a hyperplane, we have
\begin{align*}
    [\pi_s] &= q^{2g}(q-1)^{s-1} - [\pi_{s-1}] \\
    &= q^{2g}\sum_{k=1}^{s} (-1)^{k+1}(q-1)^{s-k} + (-1)^s q^{2g-1} = q^{2g-1}(q-1)^s.
\end{align*}

Now, we analyze each stratum of possible reducible representations as above. Using the previous notations for the strata we have the following.

\begin{itemize}
	\item $\XI{\Rep{}(\Sigma_{g,b}, Q)} = \emptyset$ since the punctures cannot be $\pm \Id$.
	\item $\XPh{\Rep{}(\Sigma_{g,b}, Q)}$. As in the previous case, an element $A \in \XPh{\Rep{}(\Sigma_{g,b}), Q}$ is conjugated to one of the form
$$
 \left(
	\begin{pmatrix}
		\epsilon_1 & \alpha_1 \\
		0 & \epsilon^{-1}
	\end{pmatrix},
	\begin{pmatrix}
		\epsilon_2 & \beta_1 \\
		0 & \epsilon_2^{-1}
	\end{pmatrix}, \ldots,
\begin{pmatrix}
		\epsilon_{2g+1} & \gamma_{1} \\
		0 & \epsilon_{2g+1}^{-1}
	\end{pmatrix}, \ldots,
	\begin{pmatrix}
		1 & c_1 \\
		0 & 1
	\end{pmatrix}, \ldots,
	\begin{pmatrix}
		1 & c_s \\
		0 & 1
	\end{pmatrix}
	\right),
$$
where $\epsilon_i = \pm 1$ and $c_i \neq 0$. Moreover, this element has to lie in a subset of the plane $\pi_s$, which in this case amounts to the hyperplane
$$
    \tilde{\pi}_s = \left\{(c_1, \ldots, c_s) \in (k^{*})^s\;\;\left|\,\sum_{i = 1}^s c_i = 0\right.\right\}.
$$
To compute the virtual class of this space, observe that $\tilde{\pi}_s = (k^*)^{s-1} - \tilde{\pi}_{s-1}$. Therefore, using the base case $\tilde{\pi}_1 = \emptyset$, we have
\begin{align*}
    [\tilde{\pi}_s] &= (q-1)^{s-1} - [\pi_{s-1}] = \sum_{k=1}^{s-1} (-1)^{k+1}(q-1)^{s-k} = (-1)^s \left(\frac{(1-q)^s - 1}{q} + 1\right).
\end{align*}

As in the non-parabolic case, such element is unique up to re-scalling of the off-diagonal entries. Therefore, we obtain a regular fibration trivial in the Zariski topology
	$$
		k^* \longrightarrow \SL{2}(k)/\Stab\,J_+ \times \left\{\pm 1\right\}^{2g+b-1} \times \left(k^{2g+b-1}\times \tilde{\pi}_s\right) \longrightarrow \XPh{\Rep{}(\Sigma_{g,b},Q)}.
	$$
Hence, taking virtual classes we obtain
\begin{align*}
    \left[\XPh{\Rep{}(\Sigma_{g,b}, Q)}\right] &= \left[\left\{\pm 1\right\}^{2g+b-1}\right] \frac{\left[k^{2g+b-1}\right] \left[\tilde{\pi}_s\right]}{[k] - 1} \left[\SL{2}(k)/\Stab\,J_+\right] \\&= 2^{{2g+b-1}}(q^2-1)\frac{q^{2g+b-1}}{q-1}\left((-1)^s \left(\frac{(1-q)^s - 1}{q} + 1\right)\right).
\end{align*}	
	\item $\XDh{\Rep{}(\Sigma_{g,b}, Q)} = \emptyset$ since the holonomies of the punctures are not diagonalizable.
	\item $\XTilde{\Rep{}(\Sigma_{g,b}, Q)}$. In this case, any element is conjugated to one of the form
$$
	\left(
	\begin{pmatrix}
		\lambda_1 & \alpha_1 \\
		0 & \lambda_1^{-1}
	\end{pmatrix}, \ldots,
	\begin{pmatrix}
		\mu_g & \beta_n \\
		0 & \mu_g^{-1}
	\end{pmatrix},
	\begin{pmatrix}
		\eta_{1} & \gamma_{1} \\
		0 & \eta_{1}^{-1}
	\end{pmatrix},
	\ldots,
	\begin{pmatrix}
		\eta_{b-1} & \gamma_{b-1} \\
		0 & \eta_{b-1}^{-1}
	\end{pmatrix}
	\begin{pmatrix}
		1 & c_{1} \\
		0 & 1
	\end{pmatrix}, \ldots,
	\begin{pmatrix}
		1 & c_{s} \\
		0 & 1
	\end{pmatrix}
	\right)
$$
with $(\lambda_1, \ldots, \mu_g, \eta_1, \ldots, \eta_{g-1}) \in (k^*)^{2g+b-1} - \left\{(\pm 1, \ldots, \pm 1)\right\}$, $\gamma_i \in k$ and the vector of remaining off-diagonal entries $(\alpha_i, \beta_i, c_i) \in  \pi_s$.

Notice that, now, we no longer have a condition of simultaneously non-vanishing off-diagonal entries, since the punctures are non-diagonalizable. Therefore, we have a fibration
$$
	k \times k^* \longrightarrow \PGL{2} \times \Omega \longrightarrow \XTilde{\Rep{}(\Sigma_{g,b}, Q)},
$$
where $\pi_s \to \Omega \to \left((k^*)^{2g+b-1} - \left\{(\pm 1, \ldots, \pm 1)\right\}\right) \times k^{b-1}$ is a locally trivial fibration. Thus,
$$
	\left[\XTilde{\Rep{}(\Sigma_{g,b}, Q)}\right] = \frac{q^3-q}{(q-1)q} \left((q-1)^{2g+b-1} - 2^{2g+b-1}\right)q^{b-1}\left(q^{2g-1}(q-1)^s\right).
$$
\end{itemize}
Therefore, putting all together, we obtain that
\begin{align*}
	\left[\Repred{}(\Sigma_{g,b}, Q)\right]&= \frac{1}{2q-2}\left(q^{b+2 g-2} \left(\left(q^2-1\right) 2^{b+2 g} \left((-1)^s \left((1-q)^s+q-1\right)-(q-1)^s\right)\right.\right.\\
	&+\left. \left.2 (q+1) (q-1)^{b+2 g+s}\right)\right)
\end{align*}

In \cite[Theorem 5.10]{GP-2018b}, it is proven that the virtual class of the total representation variety of the normalization is
\begin{align*}
\left[\Rep{}(\Sigma_{g}, Q)\right] =& \,{\left(q^2 - 1\right)}^{2g + s - 1} q^{2g - 1} +
\frac{1}{2} \, {\left(q -
1\right)}^{2g + s - 1}q^{2g -
1}(q+1){\left({2^{2g} + q - 3}\right)} 
\\ &+ \frac{\left(-1\right)^{s}}{2} \,
{\left(q + 1\right)}^{2g + s - 1} q^{2g - 1} (q-1){\left({2^{2g} +q -1}\right)}.
\end{align*}
With this result, we can obtain the irreducible character variety as
$$
	\left[\Charirred{}(\Sigma_{g,b}, Q)\right] = \frac{\left[\Rep{}(\Sigma_{g}, Q)\right](q^3-q)^{b-1} - \left[\Repred{}(\Sigma_{g,b}, Q)\right]}{q^3-q}
$$

Up to this point, the calculation in this parabolic case is analogous to the non-parabolic setting. Nevertheless, for the reducible locus the situation turns completely different from the previous one. Recall that $\XI{\Rep{}(\Sigma_{g,b}, Q)} = \emptyset$ so there are not completely reducible representations (i.e.\ of type $\tau = [1^2]$). Precisely for this reason, the action of $\SL{2}(k)$ on $\Repred{}(\Sigma_{g,b}, Q)$ is closed (see Section 8.1 of \cite{GP-2018b}). However, this action is not globally free so we have to distinguish between the two strata of $\Repred{}(\Sigma_{g,b}, Q)$:
	\begin{itemize}
		\item $\XTilde{\Rep{}(\Sigma_{g,b}, Q)}$. Here, the action of $\PGL{2}(k)$ is free so, by \cite[Corollary 5.5]{GP-2018b}, we have
		\begin{align*}
			\left[\XTilde{\Rep{}(\Sigma_{g,b}, Q)} \sslash \SL{2}(k)\right] &= \frac{\left[\XTilde{\Rep{}(\Sigma_{g,b}, Q)}\right]}{q^3-q} \\
			&=  \left((q-1)^{2g+b-1} - 2^{2g+b-1}\right)q^{2g+b-3}(q-1)^{s-1}.
		\end{align*}
		\item $\XPh{\Rep{}(\Sigma_{g,b}, Q)}$. Here, the action of $\PGL{2}(k)$ is not free, but it has stabilizer isomorphic to $\Stab\,J_+ \cong k$. Hence, the GIT quotient
		$
			\XPh{\Rep{}(\Sigma_{g,b}, Q)} \to \XPh{\Rep{}(\Sigma_{g,b}, Q)} \sslash \SL{2}(k)
		$
is a locally trivial fibration with fiber $\SL{2}/\Stab\,J_+$ and trivial monodromy. Thus, we have that
		$$
			\left[\XPh{\Rep{}(\Sigma_{g,b}, Q)} \sslash \SL{2}(k)\right] = \frac{\left[\XPh{\Rep{}(\Sigma_{g,b}, Q)}\right]}{q^2-1} = (-1)^s2^{{2g+b-1}}\frac{q^{2g+b-1}}{q-1}\left(\frac{(1-q)^s - 1}{q} + 1\right).
		$$
	\end{itemize}

Summarizing, the analysis above shows that

\begin{align}\label{eq:par_jordan}
\left[\Char{}(\Sigma_{g,b}, Q)\right] &=\frac{q^{2 g-3}}{2 \left(q^2-1\right)^3}\left[\left(4^g-3\right) \left(q \left(q^2-1\right)\right)^b (q-1)^{2 g+s}\right.\notag\\
&\left.+\left(q \left(q^2-1\right)\right)^b \left(q \left(4^g(q+2)+q^2-q-5\right) (q-1)^{2 g+s}+2 \left(q^2-1\right)^{2 g+s}\right.\right.\notag\\
&\left.\left.+q (-1)^s \left(4^g (q-2)+q^2-3 q+3\right) (q+1)^{2g+s}+\left(4^g-1\right) (-1)^s (q+1)^{2 g+s}\right)\right.\\
&\left.+\left(q^2-1\right) (-1)^s
   2^{b+2 g} q^b \left(q^5-q^4+q^3
  +\left(q^2-1\right)^2 (1-q)^s+2 q^2+q-1\right)\right.\notag\\
  &\left. +\left(q^2-1\right) (-1)^{s+1} q^{b+3} 3\cdot 2^{b+2 g}\right]\notag
\end{align}

\subsection{The parabolic case with diagonal punctures}\label{sec:parabolic-diagonal}
In this section, we shall study the case of puctures of semi-simple type. That is, punctures whose holonomy is conjugated to
$$
D_\xi = \begin{pmatrix}
	\xi & 0\\
	0 & \xi^{-1}\\
\end{pmatrix},
$$
for $\xi \in k^* - \left\{\pm 1\right\}$. Observe that $D_\xi$ and $D_{\xi^{-1}}$ are conjugated so the conjugacy class of $D_\xi$, $[D_\lambda]$, is determined by the trace $\tr(D_\xi) = \xi + \xi^{-1}$. 
Particularly, in this section we will focus on a parabolic structure of the form $Q = \left\{(p_1, [D_{\xi_1}]), \ldots, (p_s, [D_{\xi_s}])\right\}$, for fixed $\xi_1, \ldots, \xi_s \in k^* - \left\{\pm 1\right\}$. It will be useful to consider the set
$$
	\Lambda_\pm = \left\{(\epsilon_1, \ldots, \epsilon_s) \in \set{1, -1}^s \,\left|\,\xi_1^{\epsilon_1}\cdots \xi_s^{\epsilon_s} = \pm1\right. \right\},
$$
and the integers $\alpha_\pm = \frac{1}{2}\left|\Lambda_\pm\right|$.

In order to understand the action of $\SL{2}(k)$ on the representation variety $\Rep{}(\Sigma_g, Q)$, consider a tuple of matrices $A = (A_1, B_1, \ldots, A_g, B_g, C_1, \ldots, C_{b-1}, P_1, \ldots, P_s)$ of the form
\begin{equation}\label{eq:form-diagonal-punct}
	\left(
	\begin{pmatrix}
		\lambda_1 & \alpha_1 \\
		0 & \lambda_1^{-1}
	\end{pmatrix},
	\begin{pmatrix}
		\mu_1 & \beta_1 \\
		0 & \mu_1^{-1}
	\end{pmatrix}, \ldots,
\begin{pmatrix}
		\eta_{1} & \gamma_{1} \\
		0 & \eta_{1}^{-1}
	\end{pmatrix}, \ldots, 
	\begin{pmatrix}
		\vartheta_1 & c_1 \\
		0 & \vartheta_1^{-1}
	\end{pmatrix}, \ldots,
	\begin{pmatrix}
		\vartheta_s & c_s \\
		0 & \vartheta_s^{-1}
	\end{pmatrix}
	\right),
\end{equation}
with $\lambda_i, \mu_i, \nu_i \in k^*$ and $\alpha_i, \beta_i, \gamma_i, c_i \in k$. Then, we have that
$$
	\prod_{i=1}^g[A_i, B_i] \prod_{k=1}^s P_k =
	\begin{pmatrix}
	{\displaystyle\vartheta_1\cdots \vartheta_s} & {\displaystyle \sum_{i=1}^g \lambda_i\mu_i \left[\left(\mu_i-\mu_i^{-1}\right)\beta_i - \left(\lambda_i-\lambda_i^{-1}\right)\alpha_i\right] + \sum_{k=1}^s \left(\prod_{j \neq k}\vartheta_j\right) c_k}
	\\ 0 & {\displaystyle\vartheta_1^{-1}\cdots \vartheta_s^{-1}}
	\end{pmatrix}
$$
Therefore, $A \in \Rep{}(\Sigma_{g,b}, Q)$ if and only if the following system of equations holds
\begin{equation}\label{eq:cond-upper:parabolic-diagonal}
	\left\{\begin{matrix}
	{\displaystyle \vartheta_i + \vartheta_i^{-1} = \xi_i + \xi_i^{-1}}, \quad
	{\displaystyle \vartheta_1\cdots \vartheta_s = 1}, \\
	{\displaystyle \sum_{i=1}^g \lambda_i\mu_i \left[\left(\mu_i-\mu_i^{-1}\right)\beta_i - \left(\lambda_i-\lambda_i^{-1}\right)\alpha_i\right] + \sum_{k=1}^s \vartheta_k^{-1} c_k= 0}.
	\end{matrix}\right.
\end{equation}
In particular, the first line imposes that $(\vartheta_1, \ldots, \vartheta_s) = (\xi_1^{\epsilon_1}, \ldots, \xi_s^{\epsilon_s})$ for some $(\epsilon_1, \ldots, \epsilon_s) \in \Lambda_+$. Let us denote by $\pi$ the $(\alpha_i, \beta_i, c_i)$-hyperplane of $k^{2g+s}$ given by the second line of equation (\ref{eq:cond-upper:parabolic-diagonal}). Observe that we can solve for $c_s$ so $[\pi] = q^{2g+s-1}$.


For the quotient $\Repred{}(\Sigma_{g,b}, Q) \sslash \SL{2}(k)$, as in the non-parabolic case, the diagonal matrices with the action of $\ZZ_2$ by interchanging the eigenvalues form a core for the action. In this case, these diagonal matrices are $(k^*)^{2g+b-1} \times \Lambda_+$ and $\ZZ_2$ acts on $\Lambda$ by $(\epsilon_1,\ldots, \epsilon_s) \mapsto (-\epsilon_1,\ldots, -\epsilon_s)$. Hence, we obtain
$$
	\left[\Repred{}(\Sigma_{g,b}, Q) \sslash \SL{2}(k)\right] = \left[(k^*)^{2g+b-1} \times \Lambda_{+} / \ZZ_2\right] = \alpha_+(q-1)^{2g+b-1}.
$$

The calculation of the virtual class $\left[\Repred{}(\Sigma_{g,b}, Q)\right]$ can be done by stratifying it as in the non-parabolic case, but taking into account equations (\ref{eq:cond-upper:parabolic-diagonal}).

\begin{itemize}
	\item $\XI{\Rep{}(\Sigma_{g,b}, Q)} = \emptyset$ since the punctures cannot be $\pm \Id$.
	\item $\XPh{\Rep{}(\Sigma_{g,b}, Q)} = \emptyset$ since the puctures do not have eigenvalues $\pm 1$.
	\item For $\XDh{\Rep{}(\Sigma_{g,b}, Q)}$ we have that every element is conjugate to one of the form
$$
	\left(
	\begin{pmatrix}
		\lambda_1 & 0 \\
		0 & \lambda_1^{-1}
	\end{pmatrix}, \ldots,
	\begin{pmatrix}
		\mu_g & 0 \\
		0 & \mu_g^{-1}
	\end{pmatrix},
	\begin{pmatrix}
		\eta_{1} & 0 \\
		0 & \eta_{1}^{-1}
	\end{pmatrix},\ldots,
	\begin{pmatrix}
		\eta_{b-1} & 0 \\
		0 & \eta_{b-1}^{-1}
	\end{pmatrix},
	\begin{pmatrix}
		\xi_1^{\epsilon_1} & 0 \\
		0 & \xi_1^{-\epsilon_1}
	\end{pmatrix}, \ldots,
	\begin{pmatrix}
		\xi_s^{\epsilon_s} & 0 \\
		0 & \xi_s^{-\epsilon_s}
	\end{pmatrix}
	\right),
$$
with $(\lambda_1, \ldots, \mu_g, \eta_1, \ldots, \eta_{b-1}) \in (k^*)^{2g+b-1}$ and $(\epsilon_1, \ldots, \epsilon_s) \in \Lambda_+$. This representation is unique up to permutation of the eigenvalues, so we have a double covering
$$
	\left(\SL{2}(k)/{k^*}\right) \times (k^*)^{2g+b-1} \times \Lambda \longrightarrow \XDh{\Rep{}(\Sigma_{g,b}, Q)}.
$$
Therefore, as shown in \cite[Remark 5.3]{GP-2018b}, we obtain
$$
	\left[\XDh{\Rep{}(\Sigma_{g,b}, Q)}\right] = 2\alpha_+(q^2+q)(q-1)^{2g+b-1}.
$$
	\item For $\XTilde{\Rep{}(\Sigma_{g,b}, Q)} \subseteq \Repred{}(\Sigma_g, Q)$, we have that any element is conjugated to one of the form (\ref{eq:form-diagonal-punct}). According to equations (\ref{eq:cond-upper:parabolic-diagonal}), this implies that $(\lambda_i,\mu_i, \eta_i) \in (k^*)^{2g+b-1}$, $(\vartheta_1, \ldots, \vartheta_s) = (\xi_1^{\epsilon_1}, \ldots, \xi_s^{\epsilon_s})$ for some $(\epsilon_1, \ldots, \epsilon_s) \in \Lambda_+$, and the off-diagonal entries $(\alpha_i,\beta_{i}, c_i) \in \pi$.
	
In order to compute its virtual class, we have a fibration
$$
	k^* \times k \longrightarrow \PGL{2}(k) \times \Omega \longrightarrow \XTilde{\Rep{}(\Sigma_{g,b}, Q)}.
$$
Here, $\Omega = \left((k^*)^{2g+b-1} \times \Lambda_+\right) \times \left(\pi \times k^{b-1} -  \ell\right)$ with $\ell$ the line spanned by $(\lambda_1- \lambda_1^{-1}, \ldots, \mu_g - \mu_g^{-1}, \eta_1 - \eta_1^{-1}, \ldots, \eta_s - \eta_s^{-1}, \lambda_1 - \lambda_1^{-1}, \ldots, \lambda_s - \lambda_s^{-1})$. Therefore, the virtual class of $\Omega$ is $[\Omega] = \alpha_+(q-1)^{2g+b-1}(q^{2g+s-1}q^{b-1}-q)$ and we have
$$
	\left[\XTilde{\Rep{}(\Sigma_{g,b}, Q)}\right] = 4\alpha_+\frac{q^3-q}{(q-1)q} (q-1)^{2g+b-1}\left(q^{2g+b+s-2}-q\right).
$$
\end{itemize}
Hence, putting all the computations together we get
$$
\left[\Repred{}(\Sigma_{g,b}, Q)\right]  =\frac{1}{q^2}\left(2 (q+1) \alpha _{+} (q-1)^{b+2 g-1} \left(q^3-2 q^{b+2 g+s}\right)\right)
$$

In \cite[Theorem 5.6]{gonzalez2020virtual} it is shown that the total representation variety of the normalization is
\begin{align*}
	\left[\Rep{}(\Sigma_g, Q)\right] =\, & q^{2g+s-1}(q - 1)^{2g-1}(q + 1)(2^{2g+ s - 1} - 2^s + (q + 1)^{2g+s-2} \\
	&+ q^{2-2g-s}(q + 1)^{2g+s-2}) + {\cI}_0(\xi_1, \ldots, \xi_s),
\end{align*}
where the interaction term is given by
\begin{align*}
{\cI}_0(\xi_1, \ldots, \xi_s) =\, & q^{s-1}(q - 1)^{2g-1}(q+1)(\alpha_+ + \alpha_-)\left(q(q + 1)^{2g-1} + q^{2g}(q + 1)^{2g-1}\right.
\\ 
&\left.- q^{2g}(q + 1)^{2g-1} - q(q + 1)^{2g-1}\right) +q^{2g+s-1}(q - 1)^{2g}(q + 1)\alpha_+.
\end{align*}

Hence, plugging all these data into formula (\ref{eqn:quot}), we finally get that

\begin{align}\label{eq:pardiag}
	\left[\Char{}(\Sigma_{g,b}, Q)\right] &= (q-1)^{2 g-2} \left(\frac{4 (q-1)^b \alpha _+ \left(q^3-q^{b+2 g+s}\right)}{q^3}\right.\notag\\
	&\left.+(q-1) (q+1) \left(q \left(q^2-1\right)\right)^{b-2}
   q^{2 g+s-1} \left(\left(\frac{q}{q+1}\right)^{-2 g-s+2}\right.\right.\\
   &\left.\left.+(q+1)^{2 g+s-2}+2^{2 g+s-1}-2^s\right)-2 (q-1)^b \alpha _++(q-1)^{b+1} \alpha
   _p\right)\notag\\
   &+ {\cI}_0(\xi_1, \ldots, \xi_s)(q^3-q)^{b-2}\notag
\end{align}



\subsection{The parabolic case with arbitrary punctures}\label{sec:parabolic-arbitrary}

From the results of Sections \ref{sec:parabolic-jordan} and \ref{sec:parabolic-diagonal} we can compute the character variety of a $\SL{2}(k)$-representation variety over the node-surface $\Sigma_{g,b}$ with an arbitrary parabolic structure. Denote by $Q^+_{s}$ the parabolic structure on $\Sigma_{g,b}$ with $s$ punctures with holonomies $[J_+]$, as studied in Section \ref{sec:parabolic-jordan}.

Now, consider an arbitrary parabolic structure $Q_{r_-,r_+}^{t}(\xi_1, \ldots, \xi_s)$ with $t$ punctures with holonomy $[-\Id]$, $r_+$ punctures with holonomy $[J_+]$, $r_-$ punctures with holonomy $[J_-]$ and $s$ punctures with semi-simple holonomies $[D_{\xi_1}], \ldots, [D_{\xi_s}]$. Set $r=(r_+ + r_-)$ and $\sigma = (-1)^{r_- + t}$. Observe that $[J_-]=[-J_+]$ and $(-\Id)^2 = \Id$ so
$$
	\Rep{}(\Sigma_{g,b}, Q_{r_+,r_-}^t(\xi_1, \ldots, \xi_s)) = \left\{\begin{matrix}\Rep{}(\Sigma_{g,b}, Q_{r,0}^0(\xi_1, \ldots, \xi_s)) & \textrm{if } \sigma = 1,\\
	 \Rep{}(\Sigma_{g,b}, Q_{r,0}^1(\xi_1, \ldots, \xi_s))& \,\,\,\,\,\textrm{if } \sigma = -1.
	\end{matrix}\right.	
$$

Then, apart from the cases of Sections \ref{sec:parabolic-jordan} and \ref{sec:parabolic-diagonal}, the remaining cases reduces to the following scenarios.
\begin{itemize}
	\item $r, s > 0$ and $\sigma = 1$. Then the action of $\PGL{2}(k)$ on $\Rep{}(\Sigma_{g,b}, Q)$ is closed and free, since the only matrices that stabilize both a non-diagonalizable and a diagonalizable matrix are the multiples of the identity. Hence, using \cite[Theorem 6.1]{gonzalez2020virtual}, we get that
	\begin{align*}
		\left[\Char{}(\Sigma_{g,b}, Q_{r,0}^0(\xi_1, \ldots, \xi_s))\right] &= \frac{\left[\Rep{}(\Sigma_{g,b}, Q_{r,0}^0(\xi_1, \ldots, \xi_s))\right]}{q^3-q} \\
		&=  q^{2g + s-2}(q - 1)^{2g + r-2}\left(2^{2g+s-1} - 2^s + (q + 1)^{2g + r+s-2} \right)(q^3-q)^{b-1} \\
		& \,\,\,\,\,\,\,+ {\cI}_r(\xi_1, \ldots, \xi_s)(q^3-q)^{b-2},
	\end{align*}
	where the interaction term is
	\begin{align*}
		{\cI}_r(\xi_1, \ldots, \xi_s) =\, & q^{2g + s-1}(q - 1)^{2g + r-1}(\alpha_+ + \alpha_-)\bigg( 2^{2g} + 2^{2g}q  - 2q -2 \\
& \left. + (q + 1)^{2g + r} + (q+1)\left(1 - 2^{2g-1} - \frac{1}{2}(q + 1)^{2g + r-1}\right)\right) \\
& + q^{2g + s-1}(q - 1)^{2g + r}(q+1)\alpha_+
	\end{align*}
	\item $s > 0$ and $\sigma = -1$. Then we can absorbe the $-\Id$ puncture into one of the semi-simple punctures so we have $\Char{}(\Sigma_{g,b}, Q_{r,0}^1(\xi_1, \ldots, \xi_s)) = \Char{}(\Sigma_{g,b}, Q_{r,0}^0(-\xi_1, \ldots, \xi_s))$.
	\item $s = 0$ and $\sigma = -1$. In this case, all the representations of $\Rep{}(\Sigma_{g,b}, Q_{r,0}^1)$ are irreducible. Indeed, if $(A_1, B_1, \ldots, C_1, \ldots, C_{b-1}, P_1, \ldots, P_r) \in \Rep{}(\Sigma_{g,b}, Q_{r,0}^1(\xi_1, \ldots, \xi_s))$ then they satisfy
	$$
		\prod_{i=1}^g [A_i, B_i]\prod_{i=1}^r P_i = -\Id.
	$$
If $v$ is a common eigenvector of these matrices then $\prod_{i=1}^g [A_i, B_i]\prod_{i=1}^r P_i(v) = v$ since the eigenvalues of $[A_i,B_i]$ and $P_i$ are all $1$. This is incompatible with $-\Id(v) = v$ except if $v = 0$. Therefore, $\Rep{}(\Sigma_{g,b}, Q_{r,0}^1) = \Repirred{}(\Sigma_{g,b}, Q_{r,0}^1)$ and, thus, using \cite[Theorem 5.10]{GP-2018}
\begin{align*}
		\left[\Char{}(\Sigma_{g,b}, Q_{r,0}^1)\right] &= \frac{\left[\Rep{}(\Sigma_{g,b}, Q_{r,0}^1)\right]}{q^3-q} = \left(-1\right)^{r + 1}2^{2g - 1}  {\left(q + 1\right)}^{2g + r
+b - 3} {\left(q - 1\right)}^{b-2} q^{2g +b - 3}\\
		& + \, {\left(q - 1\right)}^{2g + r + b - 3} (q+1)^{b-2}q^{2g +b - 3}{{\left( {\left(q + 1\right)}^{2 \,
g + r-2}+2^{2g-1}-1\right)} }.
\end{align*}
\end{itemize}

%
%
%
%
%


\bibliography{bibliography.bib}{}
\bibliographystyle{abbrv}

\end{document}